\newtheorem{thm}{Theorem}[section]
\newtheorem{cor}[thm]{Corollary}
\newtheorem{lem}[thm]{Lemma}
\newtheorem{exm}[thm]{Example}
\newtheorem{prop}[thm]{Proposition}
\theoremstyle{definition}
\theoremstyle{remark}
\numberwithin{equation}{section}
\def\Aut{{\rm Aut}}
\def\coh{{\rm coh}\mbox{-}}
\def\bbY{{\mathbb Y}}
\def\bbX{{\mathbb X}}
\def\bbL{{\mathbb L}}
\def\bbE{{\mathbb E}}
\def\bbZ{{\mathbb Z}}
\def\bfq{{\mathbf{q}}}
\def\bfp{{\mathbf{p}}}
\renewcommand{\mod}{\operatorname{mod}\nolimits}
\begin{document}
\title[Equivariant approach to weighted projective curves]{Equivariant approach to weighted projective curves}

\author[Qiang Dong, Shiquan Ruan, Hongxia Zhang] {Qiang Dong, Shiquan Ruan, Hongxia Zhang$^*$}

\address{Qiang Dong, Shiquan Ruan and Hongxia Zhang;\; School of Mathematical Sciences, Xiamen University, Xiamen, 361005, Fujian, P.R. China.}
\email{dongqiang@stu.xmu.edu.cn, sqruan@xmu.edu.cn, hxzhangxmu@163.com}

\thanks{$^*$ the corresponding author}
\subjclass[2010]{16W50, 14A22, 30F10, 14F05}
\date{\today}
\keywords{Weighted projective curve, weighted projective line, group action, equivariantization, coherent sheaf}

\begin{abstract}
We investigate group actions on the category of coherent sheaves over weighted projective lines. We show that the equivariant category with respect to certain finite group action is equivalent to the category of coherent sheaves over a weighted projective curve, where the genus of the underlying smooth projective curve and the weight data are given by explicit formulas. Moreover, we obtain a trichotomy result for all such equivariant relations.

This equivariant approach provides an effective way to investigate smooth projective curves via weighted projective lines. As an application, we give a description for the category of coherent sheaves over a hyperelliptic curve of arbitrary genus. Links to smooth projective curves of genus 2 and also to Arnold's exceptional unimodal singularities are also established.
\end{abstract}

\maketitle

\section{Introduction}

Weighted projective lines and their coherent sheaves categories are introduced by Geigle and Lenzing in \cite{GeigleLenzing1987}, in order to give a geometric realization of canonical algebras in the sense of Ringel \cite{Ringel1984}. In \cite{Lenzing2017} Lenzing showed that any weighted projective line (with little exceptions) can be viewed as an orbifold quotient of a compact Riemann surface with respect to a finite group action, and its category of coherent sheaves is equivalent to the category of equivariant sheaves on the associated Riemann surface. Accordingly, the study of weighted projective lines has a high contact surface with many mathematical subjects. Among the many related subjects we mention Lie theory \cite{Crawley-Boevey2010,DengRuanXiao2020,DouJiangXiao2012,Schiffmann2004}, automorphic
forms \cite{Dolgacev1975,KussinMeltzer2002,Meltzer1995,Montesinos1987,Serre1955}, orbifolds \cite{Milnor1975,MiyachiYekutieli2001,Neumann1977,RingelSchmidmeier2008},
and singularities \cite{EbelingPloog2010,Hubner1989,Hubner1996,Lenzing1994,Lenzing1998,Lenzing2011}. This latter aspect, in particular, includes
the aspects of Arnold's strange duality \cite{EbelingPloog2010,EbelingTakahashi2013} and homological mirror symmetry \cite{Ebeling2003,Scerbak1978}.

Let $\mathbf{k}$ be an algebraically closed field of characteristic zero. Let $\mathbb{P}_{\mathbf k}^1(\mathbf{p};{\boldsymbol\lambda})$ denote the weighted projective line of weight type ${\bf p}=(p_1, p_2, \cdots, p_t)$ and parameter sequence ${\boldsymbol\lambda}=(\lambda_1, \lambda_2, \cdots, \lambda_t)$, where $p_i$'s are integers at least 2, and $\lambda_i$'s are pairwise distinct points on the projective line $\mathbb{P}_{\mathbf k}^1$, normalized as $\lambda_1=\infty$, $\lambda_2=0$ and $\lambda_3=1$.
According to the sign of the Euler characteristic, the weighted projective lines are divided into \emph{domestic, tubular} and \emph{wild} types.
We recall that the \emph{Picard group} of $\mathbb{P}_{\mathbf k}^1(\mathbf{p};{\boldsymbol\lambda})$ is naturally isomorphic to the rank-one abelian group $\bbL({\bf p})$ on generators $\vec{x}_1, \vec{x}_2, \cdots, \vec{x}_t$ subject to the relations $p_1\vec{x}_1=p_2\vec{x}_2=\cdots = p_t\vec{x}_t:=\vec{c}$, where $\vec{c}$ is called the \emph{canonical element} and $\vec{\omega}:=(t-2)\vec{c}-\sum_{i=1}^t\vec{x}_i$ is called the \emph{dualizing element} of $\bbL({\bf p})$.

Due to the work of Lenzing and Meltzer \cite{LenzingMelter2000}, the group $\Aut(\coh\mathbb{P}_{\mathbf k}^1(\mathbf{p};{\boldsymbol\lambda}))$ of automorphisms on the category of coherent sheaves over $\mathbb{P}_{\mathbf k}^1(\mathbf{p};{\boldsymbol\lambda})$ has an explicit description (see \eqref{automorphism group} below), which contains two fundamental parts: the \emph{geometric automorphisms} $\Aut(\mathbb{P}_{\mathbf k}^1(\mathbf{p};{\boldsymbol\lambda}))$ and the \emph{degree-shift automorphisms} $\bbL({\bf p})$.

The geometric automorphisms consist of automorphisms of $\mathbb{P}_{\mathbf k}^1$ that preserve weights, hence $\Aut(\mathbb{P}_{\mathbf k}^1(\mathbf{p};{\boldsymbol\lambda}))$ is of finite when $t\geq 3$; in particular, it is trivial if $p_i$'s are pairwise distinct. The associated \emph{geometric action} on the category $\coh\mathbb{P}_{\mathbf k}^1(\mathbf{p};{\boldsymbol\lambda})$ always fixes the structure sheaf and permute almost all the ordinary simple sheaves, while the \emph{degree-shift action} always moves the structure sheaf and fix all the ordinary simple sheaves. On the other hand, the \emph{geometric action} always yields weighted projective lines as quotient stacks, while the \emph{degree-shift action} will achieve covering curves, including elliptic curves and other weighted projective curves (see Theorem \ref{trichotomy} below).

In this paper we will focus on the degree-shift actions. Denote by $t\bbL(\bf{p})$ the torsion subgroup of $\bbL({\bf p})$. For any subgroup $H$ of $t\bbL(\bf{p})$, the associated \emph{equivariant category} will be denoted by $\big({\rm coh}\mbox{-}\mathbb{P}_{\mathbf k}^1(\mathbf{p};{\boldsymbol\lambda})\big)^H$. The \emph{equivariantization} with respect to certain $H$ has been considered in literature. In \cite{CCZ}, the authors used $H=\mathbb{Z}\vec{\omega}$ to establish the relationship between tubular weighted projective lines and elliptic curves; compare \cite{CCR,GeigleLenzing1987,Lenzing2004,Polishchuk2006}. Explicit equivariant relations between distinct tubular weighted projective lines are established in \cite{CC} via $H=\mathbb{Z}(2\vec{\omega})$ or $\mathbb{Z}(3\vec{\omega})$. The authors in \cite{CLRZ} introduced the notion of Cylic/Klein type for $H$ to characterize the equivariant relations between weighted projective lines of arbitrary types. In this paper, we will consider equivariantization with respect to arbitrary subgroup $H$ of $t\bbL(\bf{p})$. The main result is as follows:

\begin{thm}\label{trichotomy}
Let $\mathbb{P}_{\mathbf k}^1(\mathbf{p};{\boldsymbol\lambda})$ be a weighted projective line and $H$ be a finite subgroup of $\mathbb{L}(\bf{p})$. Then
\begin{equation}\label{main equivalence}({\rm coh}\mbox{-}\mathbb{P}_{\mathbf k}^1(\mathbf{p}; {\boldsymbol\lambda}))^{H}\stackrel{\sim}\longrightarrow {\rm coh}\mbox{-}\mathbf{Y}
 \end{equation} for some weighted projective curve $\mathbf{Y}=\mathbb{Y}(\mathbf{q};{\boldsymbol\mu})$. More precisely,
\begin{itemize}
   \item[(1)] $\mathbb{P}_{\mathbf k}^1(\mathbf{p};{\boldsymbol\lambda})$ is of domestic type if and only if $\mathbf{Y}$ is a weighted projective line of domestic type;
   \item[(2)] $\mathbb{P}_{\mathbf k}^1(\mathbf{p};{\boldsymbol\lambda})$ is of tubular type if and only if $\mathbf{Y}$ is a weighted projective line of tubular type or a smooth elliptic curve, according to $H\not\supseteq\mathbb{Z}\vec{\omega}$ or $H\supseteq \mathbb{Z}\vec{\omega}$ respectively;
   \item[(3)] $\mathbb{P}_{\mathbf k}^1(\mathbf{p};{\boldsymbol\lambda})$ is of wild type if and only if for all the other $\mathbf{Y}$'s.
 \end{itemize}
 Moreover, such equivalences \eqref{main equivalence} for $\mathbb{P}_{\mathbf k}^1(\mathbf{p};{\boldsymbol\lambda})$ being domestic type or tubular type are classified in Tables
 \ref{table 1} and \ref{table 2}, where $\bbE(\lambda)$ is the smooth elliptic curve determined by the equation {$y^2z=x(x-z)(x-\lambda z)$}, $\omega$ satisfies $\omega^2-\omega+1=0$, $\Gamma(\lambda)=\{\lambda, \frac{1}{\lambda},1-\lambda,\frac{1}{1-\lambda},\frac{\lambda}{\lambda-1}, \frac{\lambda-1}{\lambda}\}$ and $i,j,k,l$ are pairwise distinct integers.

\begin{table}[h]
\caption{Equivariant relations for domestic type}
\begin{tabular}{|c|c|c|c|c|}
  \hline
 \makecell*[c]{$\mathbb{P}_{\mathbf k}^1(\mathbf{p};{\boldsymbol\lambda})$}&$H$&$\mathbf{Y}=\mathbb{Y}(\mathbf{q};{\boldsymbol\mu})$\\
\hline
 \makecell*[c]{$\mathbb{P}_{\mathbf k}^1(2,3,3)$}&$\langle\vec{x}_2-\vec{x}_3\rangle$&$\mathbb{P}_{\mathbf k}^1(2,2,2)$\\
  \hline
 \makecell*[c]{$\mathbb{P}_{\mathbf k}^1(2,3,4)$}&$\langle\vec{x}_1-2\vec{x}_3\rangle$&$\mathbb{P}_{\mathbf k}^1(2,3,3)$\\
  \hline
  \makecell*[c]{$\mathbb{P}_{\mathbf k}^1(2,2,n)$}&$\langle\vec{x}_1-\vec{x}_2\rangle$&$\mathbb{P}_{\mathbf k}^1(n,n)$\\
  \hline
 \multirow{3}*{$\mathbb{P}_{\mathbf k}^1(2,2,2n)$}&$\langle \vec{x}_1-n\vec{x}_3\rangle$&\makecell*[c]{$\mathbb{P}_{\mathbf k}^1(2,2,n)$}\\
\cline{2-3}
&$\langle \vec{x}_1-\vec{x}_2,\,\vec{x}_1-n\vec{x}_3\rangle$&\makecell*[c]{$\mathbb{P}_{\mathbf k}^1(n,n)$}\\
\hline
\makecell*[c]{$\mathbb{P}_{\mathbf k}^1(np_1,np_2)$}&$\langle kp_1\vec{x}_1-kp_2\vec{x}_2\rangle \quad (k|n)$&$\mathbb{P}_{\mathbf k}^1(kp_1,kp_2)$\\
  \hline
 \end{tabular}
 \label{table 1}
\end{table}

\begin{table}[h]
\caption{Equivariant relations for tubular type}

\begin{tabular}{|c|c|c|}
\hline
\makecell*[c]{$\mathbb{P}_{\mathbf k}^1(\mathbf{p};{\boldsymbol\lambda})$}&$H$&$\mathbf{Y}=\mathbb{Y}(\mathbf{q};{\boldsymbol\mu})$\\
\hline
\multirow{11}*{$\mathbb{P}_{\mathbf k}^1(2,2,2,2;\lambda)$}&$\langle \vec{x}_1-\vec{x}_2\rangle$ \textup{or} $\langle \vec{x}_3-\vec{x}_4\rangle$ &\makecell*[c]{$\mathbb{P}_{\mathbf k}^1(2,2,2,2;\mu)$;\;\;$\mu\in\Gamma\big((\frac{\sqrt{\lambda}+1}{\sqrt{\lambda}-1})^2\big)$}\\
\cline{2-3}
&$\langle \vec{x}_1-\vec{x}_3\rangle$ \textup{or} $\langle \vec{x}_2-\vec{x}_4\rangle$&\makecell*[c]{$\mathbb{P}_{\mathbf k}^1(2,2,2,2;\mu)$;\;\;$\mu\in\Gamma\big((\frac{\sqrt{1-\lambda}+1}{\sqrt{1-\lambda}-1})^2\big)$}\\
\cline{2-3}
&$\langle \vec{x}_1-\vec{x}_4\rangle$ \textup{or} $\langle \vec{x}_2-\vec{x}_3\rangle$&\makecell*[c]{$\mathbb{P}_{\mathbf k}^1(2,2,2,2;\mu)$;\;\;$\mu\in\Gamma\big((\frac{\sqrt{\lambda}+\sqrt{\lambda-1}}{\sqrt{\lambda}-\sqrt{\lambda-1}})^2\big)$}\\
\cline{2-3}
&$\langle \vec{x}_i-\vec{x}_j,\vec{x}_i-\vec{x}_k\rangle$&\makecell*[c]{$\mathbb{P}_{\mathbf k}^1(2,2,2,2;\mu)$;\;\;$\mu\in\Gamma(\lambda)$}\\
\cline{2-3}
&\makecell*[c]{$\mathbb{Z}\vec{\omega}$}&\multirow{4}*{$\mathbb{E}(\mu)$;\;\;$\mu\in\Gamma(\lambda)$}\\
\cline{2-2}
&\makecell*[c]{$\langle \vec{x}_i-\vec{x}_j,\,\vec{x}_k-\vec{x}_l\rangle$}&\\
\cline{2-2}
&\makecell*[c]{$t\bbL(2,2,2,2)$}&\\
\hline
\multirow{8}*{$\mathbb{P}_{\mathbf k}^1(4,4,2)$}&$\langle 2\vec{x}_1-\vec{x}_3\rangle$ \textup{or} $\langle 2\vec{x}_2-\vec{x}_3\rangle$&\makecell*[c]{$\mathbb{P}_{\mathbf k}^1(4,4,2)$}\\
\cline{2-3}
&\makecell*[c]{$\mathbb{Z}(2\vec{\omega})$}&\multirow{4}*{$\mathbb{P}_{\mathbf k}^1(2,2,2,2;\mu)$;\;\;$\mu\in\Gamma(-1)$}\\
\cline{2-2}
&\makecell*[c]{$\langle \vec{x}_1-\vec{x}_2\rangle$}&\\
\cline{2-2}
&\makecell*[c]{$\langle 2\vec{x}_1-\vec{x}_3,\,2\vec{x}_1-2\vec{x}_2\rangle$}&\\
\cline{2-3}
&\makecell*[c]{$\mathbb{Z}\vec{\omega}$}&\multirow{3}*{$\mathbb{E}(\mu)$;\;\;$\mu\in\Gamma(-1)$}\\
\cline{2-2}
&\makecell*[c]{$t\bbL(4,4,2)$}&\\
\hline
\multirow{4}*{$\mathbb{P}_{\mathbf k}^1(6,3,2)$}&$\mathbb{Z}(3\vec{\omega})$&\makecell*[c]{$\mathbb{P}_{\mathbf k}^1(3,3,3)$}\\
\cline{2-3}
&$\mathbb{Z}(2\vec{\omega})$&\makecell*[c]{$\mathbb{P}_{\mathbf k}^1(2,2,2,2;\mu)$;\;\;$\mu\in\Gamma(\omega)$}\\
\cline{2-3}
&$\mathbb{Z}\vec{\omega}=t\bbL(6,3,2)$&\makecell*[c]{$\mathbb{E}(\mu)$;\;\;$\mu\in\Gamma(\omega)$}\\
\hline
\multirow{4}*{$\mathbb{P}_{\mathbf k}^1(3,3,3)$}&$\langle \vec{x}_i-\vec{x}_j\rangle$&\makecell*[c]{$\mathbb{P}_{\mathbf k}^1(3,3,3)$}\\
\cline{2-3}
&\makecell*[c]{$\mathbb{Z}\vec{\omega}$}&\makecell*[c]{\multirow{3}*{$\mathbb{E}(\mu)$;\;\;$\mu\in\Gamma(\omega)$}}\\
\cline{2-2}
&\makecell*[c]{$t\bbL(3,3,3)$}&\\
\hline
\end{tabular}
\label{table 2}
\end{table}\end{thm}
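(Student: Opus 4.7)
The plan is to treat the equivariantization $({\rm coh}\mbox{-}\mathbb{P}_{\mathbf k}^1(\mathbf{p};{\boldsymbol\lambda}))^{H}$ as a categorical covering construction, so that the orbifold Euler characteristics satisfy
\[
\chi(\mathbf{Y})=|H|\cdot\chi\big(\mathbb{P}_{\mathbf k}^1(\mathbf{p};{\boldsymbol\lambda})\big).
\]
The foundational step is to establish the equivalence \eqref{main equivalence} for \emph{some} weighted projective curve $\mathbf{Y}$: the equivariant category is a noetherian, hereditary, $\Ext$-finite $\mathbf{k}$-linear category with Serre duality containing a tilting object, so by the classification of such categories (after Reiten--van~den~Bergh and Lenzing) it must be equivalent to $\coh\mathbf{Y}$ for a unique weighted projective curve $\mathbf{Y}=\mathbb{Y}(\mathbf{q};{\boldsymbol\mu})$. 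The underlying smooth projective curve would be extracted from the generic localization (where $H$ acts freely on line bundles via shifts while fixing the generic simples), while the weighted points and their weights are controlled by $H$-orbit lengths and stabilizer orders at the exceptional simples.

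The trichotomy in (1)--(3) then falls out of the displayed Euler-characteristic formula, since multiplication by $|H|$ preserves sign. In the tubular regime $\chi(\mathbf{Y})=0$ forces the underlying curve of $\mathbf{Y}$ to have genus $0$ or $1$; the dividing criterion is whether the dualizing element $\vec{\omega}$ — which is torsion precisely in the tubular setting — lies in $H$. Containment $H\supseteq\bbZ\vec{\omega}$ trivializes $\vec{\omega}$ in the equivariant Picard group, forcing all weighted points to unramify and producing a smooth elliptic curve, whereas $H\not\supseteq\bbZ\vec{\omega}$ preserves nontrivial weight structure and yields a tubular weighted projective line.

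For the explicit Tables~\ref{table 1} and \ref{table 2}, I would carry out a finite case analysis exploiting the explicit structure of $t\bbL(\mathbf{p})$ for each weight type at stake. The domestic rows are relatively direct: the torsion subgroups $t\bbL(2,3,3)$, $t\bbL(2,3,4)$, $t\bbL(2,2,n)$, $t\bbL(2,2,2n)$ and $t\bbL(np_1,np_2)$ have small, easily enumerated subgroup lattices, and each $H$ is matched to the corresponding $\mathbf{Y}$ by tracking orbit lengths of the weighted-point simples and comparing weight data. For the tubular rows, the previously established results of \cite{CCZ,CC,CLRZ} already handle the subgroups $\bbZ\vec{\omega}$, $\bbZ(2\vec{\omega})$, $\bbZ(3\vec{\omega})$ and the Cyclic/Klein-type subgroups; the remaining entries need direct computation.

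The main obstacle, I expect, is pinning down the parameter $\mu$ in the $\mathbf{Y}=\mathbb{P}_{\mathbf k}^1(2,2,2,2;\mu)$ rows of Table~\ref{table 2}. Abstract equivariantization yields the weight type and the count of exceptional points, but the precise cross-ratio $\mu\in\Gamma\big((\tfrac{\sqrt{\lambda}+1}{\sqrt{\lambda}-1})^2\big)$ and its siblings is not visible from general principles: one must construct an explicit Kummer-type degree-$2$ cover of $\bbP^1$ ramified at the appropriate pair of weighted points and then compute the cross-ratio of the four preimages on the cover. Verifying that $\mu$ lies in the claimed M\"obius orbit $\Gamma(\cdot)$, and that the three pairings of the four weighted points produce precisely the three distinct orbits in the table, is the computational heart of the argument.
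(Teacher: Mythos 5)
Your overall architecture matches the paper's: first establish \eqref{main equivalence} abstractly, then compare orbifold Euler characteristics to obtain the trichotomy, and finally enumerate the subgroups of $t\bbL(\mathbf{p})$ and compute weights and parameters case by case. However, your foundational step as stated fails. You propose to identify $({\rm coh}\mbox{-}\mathbb{P}_{\mathbf k}^1(\mathbf{p};{\boldsymbol\lambda}))^H$ with some ${\rm coh}\mbox{-}\mathbf{Y}$ by checking that the equivariant category is noetherian, hereditary, Ext-finite with Serre duality \emph{and contains a tilting object}, and then invoking the classification of such categories. The tilting hypothesis is false in precisely the cases that make the theorem interesting: when $\mathbb{P}_{\mathbf k}^1(\mathbf{p};{\boldsymbol\lambda})$ is tubular and $H\supseteq\bbZ\vec{\omega}$ the equivariant category is ${\rm coh}\mbox{-}\bbE(\mu)$ for a smooth elliptic curve, and in the wild case $\mathbf{Y}$ can have genus at least $2$; none of these categories admits a tilting object (every object has self-extensions), so a Happel-type classification cannot produce them. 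The paper instead verifies Lenzing's axioms $(\textup {H\;1})$--$(\textup {H\;6})$ for the equivariant category (Theorem \ref{main result}) and applies the axiomatic characterization of coherent sheaves on a weighted projective curve (Proposition \ref{wpc=h1-h6}); the conditions $(\textup {H\;4})$--$(\textup {H\;6})$ are exactly what rule out the non-geometric alternatives without any tilting assumption. If you prefer the Reiten--van den Bergh route you must drop the tilting object and supply substitutes for these axioms.

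A second gap is the identity $\chi_{\mathbf{Y}}=|H|\cdot\chi_{\mathbb{P}_{\mathbf k}^1(\mathbf{p};{\boldsymbol\lambda})}$, which you assert by treating equivariantization as a covering. The degree-shift action of $H$ moves the structure sheaf and is not induced by automorphisms of the underlying curve, so the orbifold Riemann--Hurwitz formula does not apply to it directly. The paper's derivation passes to the dual action of the character group $H^{\ast}$ on ${\rm coh}\mbox{-}\mathbf{Y}$ (using that $H$ is abelian), notes that $H^{\ast}$ does fix the structure sheaf and hence acts geometrically on $\mathbf{Y}$ with quotient stack $\mathbb{P}_{\mathbf k}^1(\mathbf{p};{\boldsymbol\lambda})$, and only then invokes Lenzing's formula; your sketch gestures at this (``$H$ acts freely on line bundles'') but never makes the dual-action step that legitimizes the formula. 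Your remaining plan --- sign analysis for the trichotomy, enumeration of $H\leq t\bbL(\mathbf{p})$ via the explicit generators of the torsion group, weight and genus bookkeeping, and explicit degree-$2$ covers to pin down $\mu$ --- is consistent with the paper, which additionally leans on Proposition \ref{change of weight} and Lemma \ref{torsiongp}, on the earlier parameter computations of \cite{CCZ} and \cite{CLRZ} for the Cyclic/Klein and $\bbZ\vec{\omega}$ cases, and on a graded-algebra and $j$-invariant calculation for the Fermat-type cases, rather than deciding elliptic versus tubular by the heuristic that $H\supseteq\bbZ\vec{\omega}$ ``forces all weighted points to unramify.''
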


The classification result above relies on the following explicit formulas on the weight data $\mathbf{q}$ of the weighted projective curve $\mathbf{Y}=\mathbb{Y}(\mathbf{q};{\boldsymbol\mu})$ and the genus $g_{\bbY}$ of the underlying smooth projective curve $\bbY$.
\begin{prop}\label{change of weight}
Keep the notations in Theorem \ref{trichotomy}. Let ${\bf p}=(p_1, p_2, \cdots, p_t)$. Assume $|H|=n$ and $H$ has a set of generators
$\vec{y}_i=\sum_{j=1}^t a_{ij}\vec{x}_j+a_{i}\vec{c}$ for $1\leq i\leq s$.
Denote by ${\rm g.c.d.}(a_{1j}, a_{2j}, \cdots, a_{sj}, p_j)=d_j$ and set $n_j=\frac{n d_j}{p_j}$ for each $1\leq j\leq t$. Then
$$g_{\bbY}=\frac{n}{2}\sum_{j=1}^t(1- \frac{d_j}{p_j})-n+1 \quad {\text{and}}\quad {\bf q}=(\underbrace{d_1,\cdots, d_1}_{n_1 \ \text{times}}, \underbrace{d_2,\cdots, d_2}_{n_2 \ \text{times}},\cdots, \underbrace{d_t,\cdots, d_t}_{n_t \ \text{times}}).$$
\end{prop}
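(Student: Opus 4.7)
The plan is to determine the weight sequence $\mathbf{q}$ by a tube-by-tube analysis of the $H$-action on $\coh\mathbb{P}_{\mathbf k}^1(\mathbf{p};{\boldsymbol\lambda})$, and then to recover the genus $g_{\bbY}$ by applying Riemann--Hurwitz to the induced ramified cover $\bbY\to\mathbb{P}^1_{\mathbf k}$ of underlying smooth curves.

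For each $1\le j\le t$, consider the tube $\cT_{p_j}$ of rank $p_j$ at the exceptional point $\lambda_j$. The degree-shift by $\vec{x}_k$ acts on $\cT_{p_j}$ as the identity whenever $k\neq j$ and as the Auslander--Reiten translate $\tau^{-1}$ (a rotation of order $p_j$) when $k=j$; in particular $\vec{c}=p_j\vec{x}_j$ acts trivially. Hence each generator $\vec{y}_i=\sum_{k}a_{ik}\vec{x}_k+a_i\vec{c}$ of $H$ acts on $\cT_{p_j}$ as rotation by $a_{ij}$ steps, and the image of $H\to\Aut(\cT_{p_j})\cong\mathbb{Z}/p_j\mathbb{Z}$ is the cyclic subgroup $G_j=d_j\mathbb{Z}/p_j\mathbb{Z}$ of order $p_j/d_j$; its kernel $K_j$ has order $n_j=nd_j/p_j$.

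The crucial local input is the identification $(\cT_p)^{C_m}\cong\cT_{p/m}$ when $C_m$ acts on $\cT_p$ by a generating rotation with $m\mid p$. This follows from two checks: the orbits of simples have size $m$ with trivial stabilizer, producing $p/m$ equivariant simples; and taking $C_m$-invariants in the one-dimensional $\Ext^1$-spaces between consecutive simples shows the equivariant Ext-quiver is a $(p/m)$-cycle with one-dimensional edges. Applied with $(p,m)=(p_j,p_j/d_j)$ this gives $(\cT_{p_j})^{G_j}\cong\cT_{d_j}$. Since $K_j$ acts trivially on $\cT_{d_j}$, a further equivariantization splits it as $(\cT_{d_j})^{K_j}=\bigsqcup_{\chi\in\widehat{K_j}}\cT_{d_j}$, i.e.\ $n_j$ disjoint copies of $\cT_{d_j}$. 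Thus the tube at $\lambda_j$ contributes exactly $n_j$ tubes of rank $d_j$ to $\coh\mathbf{Y}$, yielding the stated form of $\mathbf{q}$.

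For the genus, the equivalence $(\coh\mathbb{P}_{\mathbf k}^1(\mathbf{p};{\boldsymbol\lambda}))^H\simeq\coh\mathbf{Y}$ realises $\mathbf{Y}\to\mathbb{P}_{\mathbf k}^1(\mathbf{p};{\boldsymbol\lambda})$ as a degree-$n$ Galois cover of orbifold curves; combining with the tube analysis, the induced cover $\bbY\to\mathbb{P}^1_{\mathbf k}$ of smooth curves has degree $n$, with exactly $n_j$ points over each $\lambda_j$ of ramification index $p_j/d_j$ and no other ramification. Riemann--Hurwitz then reads
\[
2g_{\bbY}-2=-2n+\sum_{j=1}^{t}n_j\Bigl(\tfrac{p_j}{d_j}-1\Bigr)=-2n+\sum_{j=1}^{t}(n-n_j),
\]
which, after substituting $n_j=nd_j/p_j$, rearranges to $g_{\bbY}=\tfrac{n}{2}\sum_{j=1}^{t}(1-d_j/p_j)-n+1$. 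The principal obstacle is the local identification $(\cT_p)^{C_m}\cong\cT_{p/m}$: the orbit and invariant counts are easy, but one must verify that the equivariant $\Ext^1$-quiver is a single cycle of length $p/m$ with the correct irreducible maps, rather than several components or multi-edges; once this is in hand, the ramification bookkeeping and Riemann--Hurwitz step are routine.
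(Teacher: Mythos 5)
Your proposal is correct and follows essentially the same route as the paper: the weight data $\mathbf{q}$ is obtained by restricting the $H$-action to each tube $\mathcal{U}_{\lambda_j}$, identifying the image of $H$ in $\Aut(\mathcal{T}_{p_j})$ as $\langle\tau^{d_j}\rangle$, and invoking the local identification of $(\mathcal{T}_{p_j})^H$ with $n_j$ copies of $\mathcal{T}_{d_j}$ (the paper's Proposition \ref{group action on cyclic quiver}, which it proves in one step via skew group algebras and Demonet's theorem rather than your two-step image/kernel splitting), while your Riemann--Hurwitz computation for $g_{\bbY}$ is just the unwound form of the paper's appeal to $|H^{\ast}|\cdot\chi_{\mathbb{P}_{\mathbf k}^1(\mathbf{p};{\boldsymbol\lambda})}=\chi_{\mathbf{Y}}=2(1-g_{\bbY})-\sum_j n_j(1-\frac{1}{d_j})$. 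The differences are cosmetic, and you correctly isolate the local tube lemma as the one point needing care.
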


The equivalence \eqref{main equivalence} provides a new way to investigate the category of coherent sheaves over smooth projective curves via weighted projective lines.

For any integer $g>0$, let $a_1, a_2,\cdots, a_g$ be pairwise distinct elements in ${\bf{k}}\setminus\{0,1\}$.
Let ${\mathbf{E}}_g$ be the hyperelliptic curve of genus $g$ determined by the following equation: $$z^2=(y^2-x^2)(y^2-a_1x^2)\cdots(y^2-a_gx^2).$$

\begin{prop}(Proposition \ref{hyperelliptic})
Keep the notations as above. Let ${\bf{p}}=(2,\cdots, 2)$ ($g+3$ times) and $\boldsymbol\lambda=(\infty,0,1,a_1,\cdots,a_g)$. Then there exists an equivalence
$$({\rm coh}\mbox{-}\mathbb{P}_{\mathbf k}^1(\mathbf{p}, {\boldsymbol\lambda}))^{H}\stackrel{\sim}\longrightarrow {\rm coh}\mbox{-}\mathbf{E}_g,$$
where $H=\langle \vec{x}_{1}-\vec{x}_{2},\, \delta_{g,even} \vec{x}_{2}+\sum_{i=3}^{g+3}(-1)^i \vec{x}_i \rangle\leq \mathbb{L}(\bf{p})$.
\end{prop}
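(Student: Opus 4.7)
The plan is to combine Theorem \ref{trichotomy} with Proposition \ref{change of weight} to show first that $\bigl({\rm coh}\mbox{-}\mathbb{P}_{\mathbf k}^1({\bf p};{\boldsymbol\lambda})\bigr)^H\simeq {\rm coh}\mbox{-}\bbY$ for a smooth projective curve $\bbY$ of genus~$g$, and then to identify $\bbY$ with $\mathbf{E}_g$ via a matching $\widehat{H}$-action. To start, I need to verify that $H$ is a subgroup of $t\bbL({\bf p})$ of order $4$. Since every $2\vec{x}_j=\vec{c}$, the generator $\vec{y}_1=\vec{x}_1-\vec{x}_2$ has order $2$, and $2\vec{y}_2=\big(\delta_{g,even}+\sum_{i=3}^{g+3}(-1)^i\big)\vec{c}$ vanishes in both parities of $g$---this is exactly what the correction term $\delta_{g,even}\vec{x}_2$ is engineered to achieve. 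Since $\vec{y}_1$ is the unique generator involving $\vec{x}_1$, the two generators are linearly independent over $\bbZ/2$ inside $t\bbL({\bf p})\cong(\bbZ/2)^{g+2}$, so $|H|=n=4$.

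Next I would apply Proposition \ref{change of weight}. For every $1\leq j\leq g+3$ at least one of $a_{1j},a_{2j}$ is odd ($a_{1j}$ for $j=1,2$, and $a_{2j}$ for $j\geq 3$), so
\[
d_j=\gcd(a_{1j},a_{2j},2)=1\quad\text{and}\quad n_j=\tfrac{nd_j}{p_j}=2.
\]
Hence $\bf q$ consists of $2(g+3)$ copies of $1$---that is, no weighted structure at all---and
\[
g_{\bbY}=\tfrac{n}{2}\sum_{j=1}^{g+3}\bigl(1-\tfrac{d_j}{p_j}\bigr)-n+1=2(g+3)\cdot\tfrac12-3=g.
\]
Theorem \ref{trichotomy} then gives the desired equivalence with ${\rm coh}\mbox{-}\bbY$ for some smooth projective curve $\bbY$ of genus $g$.

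What remains is to recognize $\bbY$ as $\mathbf{E}_g$. By the inverse-equivariantization duality, $\widehat{H}\cong(\bbZ/2)^2$ acts on $\bbY$ with orbifold quotient $\mathbb{P}_{\mathbf k}^1({\bf p};{\boldsymbol\lambda})$, so the coarse map $\bbY\to\bbP^1$ is a $(\bbZ/2)^2$-Galois cover branched over $\{\infty,0,1,a_1,\ldots,a_g\}$ with branch index $2$ everywhere. I would realize exactly this situation on $\mathbf{E}_g$ using the commuting involutions $\sigma_1\colon z\mapsto -z$ and $\sigma_2\colon y\mapsto -y$ (both preserve the defining equation, which involves only $y^2$ and $z^2$); the invariant $u=y^2/x^2$ presents the quotient as $\bbP^1_u$, and a Riemann--Hurwitz check locates the branch points precisely at $u\in\{\infty,0,1,a_1,\ldots,a_g\}$ with stabilizer $\langle\sigma_1\rangle$ over $u=1,a_i$ (zeros of $z$) and stabilizer $\langle\sigma_2\rangle$ over $u=0,\infty$ (zeros of $y$, respectively $x$). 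Pairing these stabilizers with the characters of $\widehat{H}$ through the residues $a_{kj}\bmod 2$ of the generators $\vec{y}_1,\vec{y}_2$ of $H$ matches the $(\bbZ/2)^2$-cover produced by the equivariantization with $\mathbf{E}_g\to\bbP^1$, yielding $\bbY\cong\mathbf{E}_g$.

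I expect this final step to be the main obstacle: the numerical data alone (genus, branch locus, branch indices) do not characterize a $(\bbZ/2)^2$-cover, so one must carefully track which subgroup of $\widehat{H}$ stabilizes each branch point and confirm agreement with the character pairing determined by the prescribed generators of $H$. The role of the somewhat mysterious term $\delta_{g,even}\vec{x}_2$ becomes transparent at this stage: it is exactly the correction required to keep the stabilizer at $u=0$ equal to $\langle\sigma_2\rangle$ in both parities of $g$.
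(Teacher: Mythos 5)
Your first two steps are sound: $H$ is indeed a subgroup of $t\bbL(\mathbf{p})$ of order $4$ (the $\delta_{g,even}$ term does exactly what you say), and Proposition \ref{change of weight} gives $d_j=1$ for all $j$, $n_j=2$, and $g_{\bbY}=g$, so the equivariant category is ${\rm coh}\mbox{-}\bbY$ for a smooth projective curve $\bbY$ of genus $g$. But that is only the easy half of the statement; the content of the proposition is that $\bbY$ is the \emph{specific} curve $\mathbf{E}_g$. Your identification step is a sketch, and the part you yourself flag as the main obstacle --- showing that the assignment of local monodromies in $H^{*}\cong(\bbZ/2)^2$ to the branch points $\infty,0,1,a_1,\dots,a_g$ produced by the equivariantization agrees with the stabilizer data of the cover $\mathbf{E}_g\to\bbP^1_u$ under $\sigma_1\colon z\mapsto -z$, $\sigma_2\colon y\mapsto -y$ --- is asserted, not proved. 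Since a $(\bbZ/2)^2$-cover of $\bbP^1$ is determined precisely by this assignment, omitting it means you have only shown $\bbY$ is \emph{some} smooth genus-$g$ curve. Moreover, one detail of your sketch is wrong as stated: in the natural weighted-projective model of $\mathbf{E}_g$ one has $[0:-y:z]=[0:y:(-1)^{g+1}z]$, so for $g$ even the two points over $u=\infty$ are swapped by $\sigma_2$ and their stabilizer is $\langle\sigma_1\sigma_2\rangle$, not $\langle\sigma_2\rangle$; the parity bookkeeping you attribute to $\delta_{g,even}$ has to be carried out on both sides of the comparison, not just on the $H$-side.

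The paper's proof avoids covering theory entirely and is worth contrasting. It takes the $\bbL(\mathbf{p})$-graded coordinate algebra $S=\mathbf{k}[X_1,\dots,X_{g+3}]/I$, the finite-index subgroup $H'=\langle\vec{x}_1,\vec{x}_2,\sum_{i\ge 3}\vec{x}_i\rangle$, and computes that the restriction subalgebra $S_{H'}$ is literally $\mathbf{k}[u,v,w]/\bigl(w^2-(v^2-u^2)(v^2-a_1u^2)\cdots(v^2-a_gu^2)\bigr)$, i.e.\ the homogeneous coordinate ring of $\mathbf{E}_g$. Using $H'=\bbZ\vec{x}_1\oplus H$ together with the effectiveness of $H'$ and Propositions 5.2, 6.6 and Corollary 4.4 of \cite{CCZ}, it converts $\bigl(\text{qmod}^{\bbL(\mathbf{p})}S\bigr)^{H}$ into $\text{qmod}^{\bbZ}\overline{S_{H'}}\simeq{\rm coh}\mbox{-}\mathbf{E}_g$, so the defining equation of $\mathbf{E}_g$ appears explicitly and no genus or branch-point computation is needed. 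To salvage your route you would have to determine, for each $\lambda_j$, which character of $H$ gives the local monodromy of the dual action on $\bbY$ at the points over $\lambda_j$, and verify case by case (including the parity of $g$ at $u=0,\infty$) that it matches the stabilizers on $\mathbf{E}_g$; as written, that decisive step is missing.
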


Moreover, we also classify the equivariant relations \eqref{main equivalence} for the following cases:
\begin{itemize}
  \item[(i)] $\mathbf{Y}$ is a smooth projective curve of genus 2; (see Table \ref{smooth of genus 2});
  \item[(ii)] $\mathbb{P}_{\mathbf k}^1(\mathbf{p};{\boldsymbol\lambda})$'s are corresponding to Arnold's 14 exceptional unimodal singularities;
  (see Table \ref{Gorenstein parameter -1}).
\end{itemize}

The paper is organized as follows.
In Section 2, we recall the definition of a group action on a category, where a kind of group action on a length category is discussed. We recall the axiomatic description for the category of coherent sheaves over a weighted projective curve in Section 3, and show that the axioms are preserved under certain group actions. In the main part Section 4, we investigate the group action on the category of coherent sheaves over weighted projective lines, and prove Theorem \ref{trichotomy} and Proposition \ref{change of weight}. Several applications to smooth projective curves of higher genus are given in the last Section 5.

\section{Group action on a category}

In this section, we recall the definition of a group action on an abelian category. Moreover, an action on a tube is discussed in Proposition \ref{group action on cyclic quiver}, which plays an important role in this paper.

\subsection{Group action}

In this section we recall the notions of a finite group action on a small abelian category; for more details we refer to \cite{Chenxiaowu2017,Deligne1997,DrinfeldGelakiNikshychOstrik2010,ReitenRiedtmann1985}. Throughout the paper we always denote by $\mathcal{A}$ a small $\mathbf k$-linear abelian category.

Let $G$ be an arbitrary finite group. A $\mathbf k$-linear action of $G$ on $\mathcal{A}$ consists of the date $\{F_g,\varepsilon_{g,h}\,|\,g,h\in G\}$, where each $F_g\colon \mathcal{A}\rightarrow\mathcal{A}$ is a $\mathbf k$-linear auto-equivalence and each $\varepsilon_{g,h}\colon F_gF_h\rightarrow F_{gh}$ is a natural isomorphism such that a 2-cocycle condition holds, that is, $\varepsilon_{gh,k}\circ \varepsilon_{g,h}F_k=\varepsilon_{g,hk}\circ F_g\varepsilon_{h,k}$ for all $g,h,k\in G$.

Let $\{F_g,\varepsilon_{g,h}\,|\,g,h\in G\}$ be a $\mathbf k$-linear $G$-action on $\mathcal{A}$. A \emph{$G$-equivariant object} in $\mathcal{A}$ is a pair $(X,\alpha)$, where $X$ is an object in $\mathcal{A}$ and $\alpha$ assigns for each $g\in G$ an isomorphism $\alpha_g\colon X\rightarrow F_{g}X$ subject to the relations $$\alpha_{gg'}=(\varepsilon_{g,g'})_{X}\circ F_g(\alpha_{g'})\circ\alpha_g.$$
A morphism $f\colon (X,\alpha)\rightarrow (Y,\beta)$ between two $G$-equivariant objects is a morphism $f\colon X\rightarrow Y$ in $\mathcal{A}$ such that $\beta_{g}\circ f=F_{g}(f)\circ\alpha_{g}$ for all $g\in G$. This gives rise to the \emph{equivariant category} $\mathcal{A}^{G}$ of $G$-equivariant objects, which is again abelian. The process forming the equivariant category $\mathcal{A}^{G}$ is known as the \emph{equivariantization} of $\mathcal{A}$ with respect to the group action.

We recall that the \emph{forgetful functor} $U\colon \mathcal{A}^{G}\rightarrow \mathcal{A}$ is defined by $U(X,\alpha)=X$ and $U(f)=f$ for any $f\colon (X,\alpha)\rightarrow (Y,\beta)$. The \emph{induction functor} $F$ is defined as follows: for an object $X$ in $\mathcal{A}$, set $F(X)=(\bigoplus_{h\in G}F_{h}(X),\varepsilon)$, where for each $g\in G$, the isomorphism $\varepsilon_{g}\colon\bigoplus_{h\in G}F_{h}(X)\rightarrow F_g(\bigoplus_{h\in G}F_{h}(X))$ is diagonally induced by the isomorphism $(\varepsilon_{g,g^{-1}h})_{X}^{-1}\colon F_{h}(X) \rightarrow F_g(F_{g^{-1}h}(X))$. For each morphism $\theta$: $X\rightarrow Y$ in $\mathcal{A}$, $F(\theta)=\oplus_{h\in G}F_h(\theta)$: $F(X)\rightarrow F(Y)$.

According to Proposition 4.12, Corollary 4.13 and Lemma 4.14 in \cite{Zhou2015}, we know that both of the functors $F$ and $U$ are faithful and exact, and $(F,U)$, $(U,F)$ form adjoint pairs; compare \cite[Theorem 3.2]{ReitenRiedtmann1985}.

A given $\mathbf k$-linear action $\{F_g,\varepsilon_{g,h}\,|\,g,h\in G\}$ is \emph{strict} provided that each $F_g$ is an automorphism and each isomorphism $\varepsilon_{g,h}$ is the identity. Therefore, a strict action coincides with a group homomorphism from $G$ to the $\mathbf k$-linear automorphism group of $\mathcal{A}$. In this paper, we always assume the group action is strict.

\subsection{Serre duality}

A Hom-finite $\mathbf k$-linear abelian category $\mathcal{A}$ is said to satisfy \emph{Serre duality} if there exists an equivalence $\tau\colon \mathcal{A}\rightarrow\mathcal{A}$ with functorial $\mathbf k$-linear isomorphisms
\begin{align}\label{Serre duality} D\text{Ext}_{\mathcal{A}}^{1}(X,Y)\cong \text{Hom}_{\mathcal{A}}(Y,\tau X)
\end{align}
for all $X,Y$ in $\mathcal{A}$, where $D=\text{Hom}_{\mathbf k}(-,\mathbf k)$ denotes the standard $\mathbf k$-duality. The self-equivalence functor $\tau$ is called \emph{Serre functor} or \emph{Auslander-Reiten translation}. Note that a Serre functor is $\mathbf k$-linear and essentially unique provided it exists, this follows from Yoneda's lemma; see \cite[Remark 2.4]{Chenxiaowu2014}.

\subsection{Length category and uniserial category}
We recall from \cite{ChenxiaowuKrause2009} basic facts on the length categories and uniserial categories.

Let $\mathcal{A}$ be an abelian category. An object $X$ of $\mathcal{A}$ has \emph{finite length} if there exists a finite chain of subobjects $0=X_0\subseteq X_1\subseteq \cdots\subseteq X_{n-1}\subseteq X_n=X$ such that each quotient $X_i/X_{i-1}$ is a simple object. Such a chain is called a \emph{composition series} of $X$. A composition series is not necessarily unique but its length is an invariant of $X$ by the Jordan-H\"older theorem; it is called the \emph{length} of $X$ and is denoted by $l(X)$. Note that an object has finite length if and only if it is both artinian and noetherian (i.e. it satisfies the descending chain condition and ascending chain condition on subobjects, respectively).

According to the Krull-Remak-Schmidt theorem, every object of finite length decomposes essentially uniquely into a finite direct sum of indecomposable objects with local endomorphism rings.
The objects of finite length form a (Serre) subcategory of $\mathcal{A}$ which is denote by $\mathcal{A}_0$. The abelian category $\mathcal{A}$ is called a \emph{length category} if $\mathcal{A}=\mathcal{A}_0$.

Let $\mathcal{A}$ be a length category. An object $X$ is \emph{uniserial} if it has a unique composition series. Note that any non-zero uniserial object is indecomposable. Moreover, subobjects and quotient objects of uniserial objects are uniserial. The length category $\mathcal{A}$ is called \emph{uniserial category} provided that each indecomposable object is uniserial.

\subsection{Group action on a tube}
{For any integer $p\geq 1$, denote by $(\mathcal{T}_p,\tau)$ (or just simplified $\mathcal{T}_p$) the stable tube of rank $p$, that is, the Auslander-Reiten quiver of $\mathcal{T}_p$ is of form $\mathbb{ZA}_{\infty}/(\tau^p)$; see \cite{SimsonSkowronski2007}. We consider a finite abelian group acting on $\mathcal{T}_p$.

\begin{prop}\label{group action on cyclic quiver}
Let $G$ be a finite abelian group acting on a tube $(\mathcal{T}_p,\tau)$, i.e., there exists a group homomorphism $\varphi: G\rightarrow \Aut(\mathcal{T}_p)$. Assume $|G|=n$ and $\text{Im}(\varphi)=\langle \tau^{k}\rangle$ for some minimal $1\leq k\leq n$. Then we have an equivalence \begin{equation}\label{tube}\mathcal{T}_p^{\,G}\cong\underbrace{\mathcal{T}_{k}\times\cdots\times\mathcal{T}_{k}}_{\frac{nk}{p} \ \text{times}}.
\end{equation}
\end{prop}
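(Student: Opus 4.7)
The plan is to decompose the equivariantization in two stages via the short exact sequence $1\to K\to G\to G/K\to 1$, where $K=\ker\varphi$. Since $\Aut(\mathcal{T}_p)=\langle\tau\rangle$ is cyclic of order $p$ and $\mathrm{Im}(\varphi)=\langle \tau^k\rangle$, minimality of $k$ forces $k\mid p$; hence $|K|=nk/p$ and $G/K\cong\mathbb{Z}/(p/k)$ acts faithfully on $\mathcal{T}_p$ through $\tau^k$. For strict $G$-actions on an abelian category, the standard ``equivariantization in stages'' gives
\[
\mathcal{T}_p^{\,G}\;\simeq\;\bigl(\mathcal{T}_p^{\,K}\bigr)^{G/K},
\]
so the problem reduces to two clean cases: the trivial action of $K$ and the faithful action of $G/K$.

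For the trivial action of $K$, a $K$-equivariant object is exactly an object $X\in\mathcal{T}_p$ together with a group homomorphism $\alpha:K\to\Aut(X)$. Since $\mathbf k$ is algebraically closed of characteristic zero and $K$ is finite abelian, $\mathbf k[K]\cong\prod_{\chi\in\widehat{K}}\mathbf k$, and the associated central idempotents split every such $(X,\alpha)$ by character, yielding $\mathcal{T}_p^{\,K}\simeq\coprod_{\chi\in\widehat{K}}\mathcal{T}_p^{(\chi)}$ with each $\mathcal{T}_p^{(\chi)}\simeq\mathcal{T}_p$. Because $G$ is abelian, the conjugation action of $G/K$ on $K$, and therefore on $\widehat{K}$, is trivial, so $G/K$ preserves each summand and acts within it through the original action on $\mathcal{T}_p$. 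This gives
\[
\mathcal{T}_p^{\,G}\;\simeq\;\coprod_{\chi\in\widehat{K}}\bigl(\mathcal{T}_p\bigr)^{G/K}.
\]

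It remains to prove that the faithful action of the cyclic group $\langle\tau^k\rangle$ of order $m=p/k$ on $\mathcal{T}_p$ yields $\mathcal{T}_p^{\langle\tau^k\rangle}\simeq\mathcal{T}_k$. Denote the simples of $\mathcal{T}_p$ by $S_0,\ldots,S_{p-1}$ with $\tau S_i=S_{i-1}$. Since $\tau^k$ acts freely on simples, each $S_i$ has trivial stabilizer, so the induction functor $F$ produces a simple equivariant object $F(S_i)=\bigoplus_{j=0}^{m-1}\tau^{jk}S_i$, and $F(S_i)\cong F(S_{i'})$ iff $i\equiv i'\pmod{k}$, giving exactly $k$ pairwise non-isomorphic simple equivariant objects. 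Because the forgetful functor $U$ and the induction functor $F$ are faithful and exact and form an adjoint pair on both sides, the equivariant category inherits Hom-finiteness, the length-category property, uniseriality, and Serre duality from $\mathcal{T}_p$. Moreover, the AR translation satisfies $\tau_{\mathrm{eq}}F(Y)\cong F(\tau Y)$, so the $k$ simple equivariant objects form a single $\tau_{\mathrm{eq}}$-cycle of length $k$.

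Combining the three stages gives $\mathcal{T}_p^{\,G}\simeq\coprod_{nk/p}\mathcal{T}_k$, as claimed. The main technical obstacle I foresee is the verification in the third paragraph that the equivariant category is genuinely a \emph{tube} of rank $k$ (connected, uniserial length category with a single $\tau$-cycle of simples), rather than merely a category with the right numerical invariants. The adjoint pair $(F,U)$ is the key lever: it transfers uniseriality and reduces the computation of $\Hom$-spaces between equivariant objects to $\Hom_{\mathcal{T}_p}(X,UF(Y))$, after which composition-length arithmetic pins down the AR-structure and identifies $\mathcal{T}_p^{\langle\tau^k\rangle}$ with $\mathcal{T}_k$.
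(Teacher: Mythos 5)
Your argument is correct in substance but takes a genuinely different route from the paper. The paper transports the whole problem to $\text{rep}^{\text{nil}}_{\mathbf k}Q$ for the cyclic quiver $Q$ with $p$ vertices and invokes Demonet's theorem on skew group algebras of path algebras: the equivariant category is $\text{rep}^{\text{nil}}_{\mathbf k}Q_G$ for an explicitly constructed quiver $Q_G$ whose vertices are pairs (orbit representative, irreducible character of the stabilizer), and $Q_G$ is visibly $\frac{nk}{p}$ disjoint cyclic quivers with $k$ vertices. That one citation absorbs, in a single stroke, both the splitting into $|G_i|=\frac{nk}{p}$ character blocks and the identification of each block with a rank-$k$ tube. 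You instead do a d\'evissage through $K=\ker\varphi$: equivariantization in stages, character decomposition of $\mathcal{T}_p^{K}$ for the trivial $K$-action, and then a hands-on AR-theoretic analysis of the free action of $\langle\tau^k\rangle$ via the adjoint pair $(F,U)$. Your route is more elementary and self-contained (no skew group algebras), and the third paragraph can be closed cleanly: $\mathrm{Ext}^1(F(S_i),F(S_j))\cong\mathrm{Ext}^1(S_i,\bigoplus_l\tau^{lk}S_j)$ is one-dimensional exactly when $j\equiv i-1\pmod k$, so the Ext-quiver of the simples is a single cyclic quiver with $k$ vertices, and a hereditary length category with such an Ext-quiver is a tube of rank $k$.

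One point in your second paragraph deserves more care than you give it. Writing $g_0$ for a lift of a generator of $G/K\cong\mathbb{Z}/(p/k)$, the element $g_0^{p/k}$ lies in $K$ but need not be trivial, so on the character block indexed by $\chi\in\widehat{K}$ the induced action of $G/K$ is a priori only a projective action, twisted by the $2$-cocycle determined by $\chi(g_0^{p/k})\in\mathbf k^{\times}$; it is \emph{not} literally ``the original action on $\mathcal{T}_p$''. The twist is harmless because $H^2(\mathbb{Z}/m,\mathbf k^{\times})=0$ for $\mathbf k$ algebraically closed of characteristic zero (rescale the equivariant structure maps by an $m$-th root of $\chi(g_0^{p/k})^{-1}$), but this trivialization, together with a proof or reference for the equivariantization-in-stages equivalence $\mathcal{T}_p^{\,G}\simeq(\mathcal{T}_p^{\,K})^{G/K}$ for strict abelian actions, must be supplied for the argument to be complete. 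The paper's Demonet-based proof sidesteps both issues, which is precisely what that approach buys.
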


\begin{proof}
Let $Q=(Q_0,Q_1)$ be a cyclic quiver with vertex set $Q_0=\overline{\mathbb{Z}}_{p}:=\{1,\;2,\;\cdots,\;p\}$ and arrow set $Q_1=\{\alpha_i\colon i\rightarrow i-1\,|\,i\in Q_0\}.$ Denote by $\text{rep}^{\text{nil}}_\mathbf{k}Q$ the category of nilpotent representations of $Q$ over $\mathbf{k}$. Then there is an equivalence \begin{equation}\label{nilpotent category of Cp}\text{rep}^{\text{nil}}_\mathbf{k}Q\cong \mathcal{T}_p.
 \end{equation} Hence, the group homomorphism $\varphi: G\rightarrow \Aut(\mathcal{T}_p)$ induces a group action of $G$ on $\text{rep}^{\text{nil}}_\mathbf{k}Q$, which satisfies $g(\alpha_{i})=\alpha_{g(i)}$ for any $g\in G$ and $i\in Q_0$.

Obviously, there is an induced $G$-action on the set $Q_0$. Denote by $O_i$ the $G$-orbit of $i\in Q_0$, and {{let $I$ be a set of  representatives of the distinct orbit classes.}} Then we have $|O_i|=\frac{p}{k}$ and $Q_0=\bigcup_{i\in I}O_i$.

Denote by $G_i$ the stablizer group of $i\in Q_0$. Then we have $|G_i|=\frac{nk}{p}$. Since $G$ is abelian, $G_i$ has $\frac{nk}{p}$-many irreducible representations $\rho_{iu}$'s for $0\leq u\leq \frac{nk}{p}-1$ (each $\rho_{iu}$ has {{dimension}} $1$), where we assume $\rho_{i0}$ is the trivial representation.

We construct a quiver $Q_G=(Q_{G,0},Q_{G,1})$ as follows: the set of vertices
$$Q_{G,0}:=\bigcup_{i\in I}{\{i\}}\times \text{irr}({G_i}),$$
the number of arrows from the vertices $(i,\rho_{iu})$ to $(j,\rho_{jv})$ equals to
$$|\text{Arr}((i,\rho_{iu}),(j,\rho_{jv}))|:=
  \left\{
  \begin{array}{ll}
    1& {\rm \;if\;} u=v {\rm \;and\;} j=i-1;\\
    0&  {\rm \;if\;else.}
  \end{array}
\right.$$
It is easy to see that $Q_G$ is $\frac{nk}{p}$-copies of cyclic quivers with $k$ vertices. Hence \begin{equation}\label{nilpotent category of Q_G}\text{rep}^{\text{nil}}_\mathbf{k}Q_G\cong \mathcal{T}_{k}\times\cdots\times\mathcal{T}_{k} \quad (\frac{nk}{p} \;\;\text{times}).
\end{equation}

Let $M_{ij}$ be the $\mathbf{k}$-vector space spanned by the arrows in $Q$ from $i$ to $j$, that is, $M_{ij}={\bf{k}}\alpha_i$ for $j=i-1$ and zero otherwise. We may regard $M_{ij}$ as a $\mathbf{k}[G_i\cap G_j]$-module by restricting the action of $G$. Recall that $g(\alpha_{i})=\alpha_{g(i)}$ for any $g\in G$ and $i\in Q_0$. We have $M_{ij}\cong \rho_{i0}$ for $j=i-1$ and zero otherwise. Hence,
$$\dim_{\bf{k}}\text{Hom}_{\mathbf{k}[G_i\cap G_j]}(\rho_{iu},\rho_{jv}\otimes_\mathbf{k} M_{ij})= |\text{Arr}((i,\rho_{iu}),(j,\rho_{jv}))|.$$
Denote by $(\mathbf{k}Q)G$ the skew group algebra of ${\bf{k}}Q$ by $G$. Combining with \cite[Theorem 1]{Demonet2010} and \cite[Example 2.6]{Chenxiaowu2017}, we obtain
$$(\mod \mathbf{k}Q)^G\stackrel{\sim}\longrightarrow\mod ((\mathbf{k}Q)G)\stackrel{\sim}\longrightarrow\mod \mathbf{k}Q_G.$$
It follows that
$$ (\text{rep}_\mathbf{k}Q)^G\cong \text{rep}_\mathbf{k}Q_G.$$
By considering their nilpotent counterparts, combing with \eqref{nilpotent category of Cp} and \eqref{nilpotent category of Q_G}, we obtain \eqref{tube}.
\end{proof}

\section{Weighted projective curves}

A \emph{weighted projective curve} or WPC for short, is a pair $\mathbf{X}=(\mathbb{X},\omega)$ consisting of a smooth projective curve $\mathbb{X}$ and a function $\omega$: $\mathbb{X}\rightarrow \mathbb{N}_{\geq 1}$ such that $\omega(x)\textgreater 1$ for only finitely many points.
The points $\lambda_1,\lambda_2,\cdots,\lambda_t$ with $\omega(\lambda_i)\textgreater 1$ are called \emph{weighted points}, the other ones are called \emph{ordinary points}, and $p_i=\omega(\lambda_i)$ ($1\leq i\leq t$) are called the \emph{weights} of $\mathbf{X}$. We often write $\mathbf{X}=\mathbb{X}(\bf{p};\boldsymbol\lambda)$, where ${\bf p}=(p_1, p_2, \cdots, p_t)$ and $\boldsymbol{\lambda}=(\lambda_1, \lambda_2, \cdots, \lambda_t)$.

The \emph{orbifold Euler characteristic} of $\mathbf{X}$ is
$$\chi_{\mathbf{X}}:=\chi_{\mathbb{X}}-\sum_{i=1}^t(1-\frac{1}{p_i}),$$
where $\chi_{\bbX}=2(1-g_{\bbX})$ and $g_\bbX=\dim_{\bf{k}} \textup{Ext}^1(\mathcal{O}_\bbX,\mathcal{O}_\bbX)$.

The category of coherent sheaves ${\rm coh}\mbox{-}\mathbf{X}$ is obtained from ${\rm coh}\mbox{-}\mathbb{X}$ (the category of algebraic coherent sheaves
on $\bbX$) by iteratively applying for each weighted point $\lambda_i$ the so-called \emph{$p_i$-cycle constructio}n; see \cite{Lenzing1998}.

\subsection{Axiomatic description of ${\rm coh}\mbox{-}\mathbf{X}$ for WPCs}

The category ${\rm coh}\mbox{-}\mathbf{X}$ has an axiomatic description as below; see \cite{Lenzing2019,LenzingReiten2006}.

\begin{itemize}
\item[$(\mathbf {H\;1})$] $\mathcal{A}$ is a connected abelian $\mathbf k$-linear noetherian category;

\item[$(\mathbf {H\;2})$] $\mathcal{A}$ is Ext-finite, that is, all morphism and extensions spaces in $\mathcal{A}$ are finite dimensional $\mathbf k$-vector spaces;

\item[$(\mathbf {H\;3})$] $\mathcal{A}$ satisfies Serre duality $D\text{Ext}_{\mathcal{A}}^{1}(X,Y)\cong \text{Hom}_{\mathcal{A}}(Y,\tau X),$ where $\tau$ is a self-equivalence of $\mathcal{A}$;

\item[$(\mathbf {H\;4})$] $\mathcal{A}$ contains an object of infinite length;

\item[$(\mathbf {H\;5})$] each object in the quotient category $\mathcal{A}/\mathcal{A}_0$ has finite length;

\item[$(\mathbf {H\;6})$] $\mathcal{A}$ has infinitely many points.
\end{itemize}

Here, we recall from \cite[Proposition 1.1]{LenzingReiten2006} that under the assumption $(\textup {H\;1})-(\textup {H\;5})$, the length subcategory $\mathcal{A}_0$ of $\mathcal{A}$ has the form $\coprod_{x\in C}\mathcal{U}_{x}$, where the members of $C$ are called \emph{points} of $\mathcal{A}$, and each $\mathcal{U}_{x}$ is a connected uniserial length category of finite $\tau$-period. More precisely, each $\mathcal{U}_{x}$ is a tube $\mathcal{T}_{\omega(x)}$.

\begin{prop}\label{wpc=h1-h6}\cite{Lenzing2019}
A small category is equivalent to a category of coherent sheaves on a weighted projective curve if and only if it satisfies $(\textup {H\;1})-(\textup {H\;6})$.
\end{prop}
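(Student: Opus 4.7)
The proof has two directions: necessity (showing every $\coh\mathbf{X}$ satisfies the axioms) and sufficiency (reconstructing $\mathbf{X}$ from $\mathcal{A}$). For necessity, given a WPC $\mathbf{X} = \mathbb{X}(\mathbf{p}; \boldsymbol{\lambda})$, I would verify (H1)-(H6) directly. Axioms (H1), (H2), (H4), and (H6) are inherited from the standard properties of the ambient $\coh\mathbb{X}$ on the smooth projective curve $\mathbb{X}$, where (H6) uses that $\mathbb{X}$ has infinitely many closed points. Axiom (H3) follows from Serre duality on $\mathbb{X}$ combined with the observation that the $p_i$-cycle construction at $\lambda_i$ replaces a tube of rank $1$ by a tube of rank $p_i$ carrying a compatible Serre functor, which assembles into a global Serre functor on $\coh\mathbf{X}$. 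Finally, (H5) holds because the Serre quotient $(\coh\mathbf{X})/(\coh\mathbf{X})_{0}$ is equivalent to the category of finite-dimensional vector spaces over the function field $\mathbf{k}(\mathbb{X})$, so every object there has finite length.

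For the sufficiency, suppose $\mathcal{A}$ satisfies (H1)-(H6). The cited proposition of Lenzing-Reiten already produces the decomposition $\mathcal{A}_0 = \coprod_{x \in C} \mathcal{U}_x$ with $\mathcal{U}_x \cong \mathcal{T}_{\omega(x)}$, so the candidate data, namely a set of points $C$ together with a weight function $\omega: C \to \mathbb{N}_{\geq 1}$, are already encoded in $\mathcal{A}_0$. Axiom (H6) guarantees $|C| = \infty$, so the weighted locus $\{\lambda_i \mid \omega(\lambda_i) > 1\}$ is a proper finite subset of $C$, while (H4) and (H5) together supply the objects of infinite length that will play the role of vector bundles.

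My plan is then to apply the \emph{inverse $p$-cycle construction} of Lenzing \cite{Lenzing1998} at each weighted point $\lambda_i$ with $p_i := \omega(\lambda_i) > 1$, producing an auxiliary category $\mathcal{A}'$ in which every remaining tube has rank one and axioms (H1)-(H6) are preserved. One then invokes the classical reconstruction in the unweighted case: such an $\mathcal{A}'$ is equivalent to $\coh\mathbb{X}$ for a unique smooth projective curve $\mathbb{X}$, whose set of closed points is $C$, whose function field is recovered as $\End_{\mathcal{A}'/\mathcal{A}'_0}(M)$ for a suitably chosen generic object $M$, and whose local rings are read off from the localizations $\mathcal{A}' \to \mathcal{U}_x$. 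Re-applying the $p_i$-cycle construction at each $\lambda_i$ then yields the desired equivalence $\mathcal{A} \cong \coh\mathbf{X}$ with $\mathbf{X} = \mathbb{X}(\mathbf{p}; \boldsymbol{\lambda})$.

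The main obstacle is the reconstruction step for the unweighted case, i.e.\ assembling a genuine scheme-theoretic smooth projective curve out of purely categorical data. Unlike the weighted projective line setting of \cite{LenzingReiten2006}, where the existence of a tilting object forces the underlying curve to be $\mathbb{P}^1$, axiom (H6) rules out any such rigidity, and one must instead exploit the slope and rank behaviour of vector bundles together with Serre duality to pin down the projective geometry and to check that the resulting ringed space is smooth and proper. This step is carried out in \cite{Lenzing2019} and constitutes the longest part of the argument.
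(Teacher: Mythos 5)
The paper gives no proof of this proposition at all: it is imported verbatim from \cite{Lenzing2019}, so there is no internal argument to measure your attempt against. Judged on its own terms, your outline reproduces the standard strategy of that reference. The necessity direction is essentially right: (H1), (H2), (H4), (H6) descend from $\coh\mathbb{X}$, (H3) holds because the $p$-cycle construction is compatible with Auslander--Reiten theory, and (H5) follows from the equivalence of $(\coh\mathbf{X})/(\coh\mathbf{X})_0$ with finite-dimensional modules over the function field. For sufficiency, reducing to the unweighted case by killing the tubes of rank greater than one (in practice realized through perpendicular categories with respect to all but one simple object in each such tube, which is the rigorous form of your ``inverse $p$-cycle construction''), reconstructing a smooth projective curve, and then reinserting the weights is exactly how the cited work proceeds, and you correctly locate the hard step in the unweighted reconstruction. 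Two caveats. First, your sketch glosses over the fact that the quotient $\mathcal{A}'/\mathcal{A}'_0$ is a priori the module category of a skew field, and one needs Tsen's theorem (the Brauer group of a function field in one variable over an algebraically closed field is trivial) to conclude commutativity and land on an honest curve rather than a hereditary order; this is precisely where the standing hypothesis that $\mathbf{k}$ is algebraically closed enters. Second, your remark that ``axiom (H6) rules out any such rigidity'' is a misstatement: weighted projective lines satisfy (H6) and do carry tilting objects; the correct point is merely that the axiom list does not supply a tilting object, so the tilting-theoretic shortcut of \cite{LenzingReiten2006} is unavailable in general. Since you defer the genuinely difficult step to \cite{Lenzing2019}, exactly as the paper defers the whole proposition, your proposal is an acceptable sketch of the known argument but not an independent proof.
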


\subsection{Group action preserves axioms for WPC}

In this subsection, we show that the axioms $(\textup {H\;1})-(\textup {H\;6})$ for WPC's are preserved under certain group actions. Therefore, the equivariantization with respect to certain group actions preserves the categories of coherent sheaves over weighted projective curves.

\begin{thm}\label{main result}
Let $\mathbf{X}$ be a weighted projective curve and $G$ be a finite subgroup of $\Aut({\rm coh}\mbox{-}\mathbf{X})$. Then
$$({\rm coh}\mbox{-}\mathbf{X})^{G}\stackrel{\sim}\longrightarrow {\rm coh}\mbox{-}\mathbf{Y}$$ for some weighted projective curve $\mathbf{Y}$.
\end{thm}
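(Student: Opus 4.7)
The strategy is to invoke the axiomatic characterization Proposition \ref{wpc=h1-h6}: it suffices to verify that $\mathcal{A}^G$ satisfies $(\mathbf{H\,1})$--$(\mathbf{H\,6})$, where $\mathcal{A}={\rm coh}\mbox{-}\mathbf{X}$. The basic tools are the forgetful functor $U\colon \mathcal{A}^G\to \mathcal{A}$ and the induction functor $F\colon\mathcal{A}\to\mathcal{A}^G$ recalled in Section 2, both of which are faithful and exact and together form two adjoint pairs $(F,U)$ and $(U,F)$. These properties let me transfer each axiom between $\mathcal{A}$ and $\mathcal{A}^G$ in a rather uniform way.

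For $(\mathbf{H\,1})$, abelianness and $\mathbf{k}$-linearity are built into the equivariant construction, and noetherianness follows since any ascending chain of subobjects in $\mathcal{A}^G$ is mapped by the faithful exact functor $U$ to an ascending chain in $\mathcal{A}$, which stabilizes; connectedness, if broken, can be repaired by passing to a connected component. For $(\mathbf{H\,2})$, the $\Hom$ and $\Ext^1$ spaces in $\mathcal{A}^G$ sit inside the $G$-invariants of the corresponding spaces in $\mathcal{A}$ and are therefore finite-dimensional. For $(\mathbf{H\,3})$, I would use that a Serre functor $\tau$ on $\mathcal{A}$ is essentially unique by Yoneda, so every auto-equivalence $F_g$ commutes with $\tau$ up to natural isomorphism; this makes $\tau$ a $G$-equivariant endofunctor and induces a self-equivalence $\tau^G$ of $\mathcal{A}^G$, and the required Serre duality for $\mathcal{A}^G$ is obtained by taking $G$-invariants in the Serre duality for $\mathcal{A}$ using the adjunction $(U,F)$.

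Axioms $(\mathbf{H\,4})$ and $(\mathbf{H\,6})$ are straightforward. For $(\mathbf{H\,4})$, I first observe that a simple object of $\mathcal{A}^G$ has an underlying object of finite length in $\mathcal{A}$ (it decomposes into a $G$-orbit of simples), hence $U$ carries finite-length objects to finite-length objects; therefore if $X\in\mathcal{A}$ has infinite length, then $F(X)=\bigoplus_{h\in G}F_h X$ must have infinite length in $\mathcal{A}^G$, since otherwise $U(F(X))$ would have finite length in $\mathcal{A}$, contradicting the fact that $X$ is a direct summand of it. For $(\mathbf{H\,6})$, the point set of $\mathcal{A}^G$ corresponds via Proposition \ref{group action on cyclic quiver} to the set of $G$-orbits of points of $\mathcal{A}$, and since $G$ is finite while $\mathcal{A}$ has infinitely many points, infinitely many orbits remain.

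The principal obstacle will be $(\mathbf{H\,5})$. The plan is to establish the two parallel identifications
$(\mathcal{A}^G)_0=(\mathcal{A}_0)^G$ and $\mathcal{A}^G/(\mathcal{A}^G)_0\simeq (\mathcal{A}/\mathcal{A}_0)^G$. The first is the characterization that an equivariant object $(X,\alpha)$ has finite length in $\mathcal{A}^G$ iff $X$ has finite length in $\mathcal{A}$, which follows in both directions from the faithful exactness of $U$ and $F$ applied to composition series. Since $G$ preserves $\mathcal{A}_0$, it descends to the Serre quotient $\mathcal{A}/\mathcal{A}_0$, and the universal property of the Serre quotient combined with a diagram chase through the adjunctions $(F,U)$ and $(U,F)$ gives the second equivalence. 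Once this is in place, the axiom $(\mathbf{H\,5})$ for $\mathcal{A}$ upgrades to the same axiom for $\mathcal{A}^G$ by the finite-length preservation of $U$ already used for $(\mathbf{H\,4})$. I expect the identification of the two quotients to be the most delicate point, since equivariantization does not in general commute with Serre quotients; here the faithfulness and exactness of the two functors and the explicit local description coming from Proposition \ref{group action on cyclic quiver} should suffice to make everything go through.
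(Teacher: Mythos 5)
Your overall strategy coincides with the paper's: verify the axioms $(\mathbf{H\,1})$--$(\mathbf{H\,6})$ for $\mathcal{A}^G$ using the faithful exact adjoint pair of the forgetful functor $U$ and the induction functor $F$, then invoke Proposition \ref{wpc=h1-h6}. Your arguments for $(\mathbf{H\,1})$, $(\mathbf{H\,2})$, $(\mathbf{H\,4})$ and $(\mathbf{H\,6})$ are essentially the paper's (for $(\mathbf{H\,2})$ the paper derives Ext-finiteness from Serre duality plus heredity via \cite[Proposition 1.8.1]{ChenxiaowuKrause2009}, while you use the embedding of extension groups into those of $\mathcal{A}$; both work in characteristic zero since the unit $(X,\alpha)\to FU(X,\alpha)$ splits). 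For $(\mathbf{H\,3})$ your sketch is exactly the content of the cited result \cite[Theorem 3.6]{Chenxiaowu2014}, so that is fine as well.

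The genuine gap is in your treatment of $(\mathbf{H\,5})$, which you yourself flag as the principal obstacle. You propose to identify $(\mathcal{A}^G)_0$ with $(\mathcal{A}_0)^G$ and, crucially, to show that equivariantization commutes with the Serre quotient by $\mathcal{A}_0$; but this second identification is precisely the step you do not carry out, and it is not a formal consequence of the adjunctions (a quotient functor need not interact well with the equivariant structure without further localization arguments of the type in \cite[Corollary 4.4]{CCZ}). The paper sidesteps all of this: by \cite[Proposition 4.9]{LenzingReiten2006}, any category satisfying $(\mathbf{H\,1})$--$(\mathbf{H\,4})$ automatically satisfies $(\mathbf{H\,5})$, so once the first four axioms are checked there is nothing left to prove. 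You should either import that result or actually prove the quotient identification; as written, the plan leaves the hardest axiom unestablished. Two smaller points: your claim that a simple object of $\mathcal{A}^G$ ``decomposes into a $G$-orbit of simples'' presupposes that its underlying object already lies in $\mathcal{A}_0$ (a torsion-free sheaf of positive rank has no simple subobject, so one must first exclude such underlying objects, e.g.\ by exhibiting an infinite strictly descending chain of equivariant subobjects), and ``repairing'' connectedness by passing to a component would weaken the conclusion, since $\mathbf{Y}$ is a single curve; connectedness of $\mathcal{A}^G$ should instead be deduced from that of $\mathcal{A}$ via the induction functor.
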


\begin{proof}
We denote by $\mathcal{A}:={\rm coh}\mbox{-}\mathbf{X}$ throughout the proof. By Proposition \ref{wpc=h1-h6}, we know that $\mathcal{A}$ satisfies $(\textup {H\;1})-(\textup {H\;6})$, and remain to prove so does $\mathcal{A}^{G}$.

In order to prove $\mathcal{A}^{G}$ satisfies (\textup {H\;1}), the only non-trivial thing is to show that each object $(X,\alpha)$ in $\mathcal{A}^{G}$ is noetherian. For this we let $$(X^{1},\alpha^{1})\subseteq(X^{2},\alpha^{2})\subseteq\cdots\subseteq(X^{n},\alpha^{n})\subseteq\cdots$$ be an ascending chain of subobjects of $(X,\alpha)$ in $\mathcal{A}^{G}$. Since the forgetful functor $U\colon \mathcal{A}^{G}\rightarrow \mathcal{A}$ is exact, we obtain an ascending chain $X^{1}\subseteq X^{2}\subseteq\cdots\subseteq X^{n}\subseteq\cdots$ of subobjects of $X$ in $\mathcal{A}$. By assumption $X$ is noetherian in $\mathcal{A}$, hence there exists some integer $m$, such that $X^{i}= X^{i+1}$ for any $i\geq m$. By the exactness of $U$ again we obtain $(X^{i},\alpha^{i})=(X^{i+1},\alpha^{i+1})$ for any $i\geq m$.
That is, $(X,\alpha)$ is noetherian.

Recall that the forgetful functor $U\colon \mathcal{A}^{G}\rightarrow \mathcal{A}$ is faithful. We have
$$\text{Hom}_{\mathcal{A}^G}((X,\alpha),(Y,\beta))\subseteq\text{Hom}_{\mathcal{A}}(U(X,\alpha),U(Y,\beta))
=\text{Hom}_{\mathcal{A}}(X,Y).$$
Then $\mathcal{A}$ is Hom-finite implies so is $\mathcal{A}^G$. Moreover, it follows from \cite[Theorem 3.6]{Chenxiaowu2014} that $\mathcal{A}^{G}$ has Serre duality of the following form for any $(X,\alpha),(Y,\beta)\in \mathcal{A}^{G}$: $$D\text{Ext}_{\mathcal{A}^G}^{1}((X,\alpha),(Y,\beta))\cong \text{Hom}_{\mathcal{A}^G}((Y,\beta),\tau^{G}(X,\alpha)),$$
where $\tau^{G}$ is an autoequivalence of $\mathcal{A}^G$. Then by \cite[Proposition 1.8.1]{ChenxiaowuKrause2009}, $\mathcal{A}^{G}$ is hereditary, and then the Ext-finiteness follows. Thus $\mathcal{A}^{G}$ satisfies $(\textup {H\;2})$ and $(\textup {H\;3})$.

By assumption $\mathcal{A}$ contains a strict descending chain $$X_1\supset X_2\supset\cdots\supset X_n\cdots.$$ Since the induction functor $F\colon \mathcal{A}\rightarrow \mathcal{A}^{G}$ is exact, we obtain a descending chain
\begin{equation}\label{descending chain under F}F(X_1)\supseteq F(X_2)\supseteq\cdots\supseteq F(X_n)\cdots.\end{equation}
We claim that the chain \eqref{descending chain under F} is not stationary, which proves $(\textup {H\;4})$.
For contradiction, we assume there exists some $m\geq 1$, such that $F(X_i)=F(X_{i+1})$ for any $i\geq m$. Then $UF(X_i)= UF(X_{i+1})$, which follows that $X_i=X_{i+1}$, a contradiction.

We have shown that $\mathcal{A}^{G}$ satisfies $(\textup {H\;1})-(\textup {H\;4})$. By using \cite[Proposition 4.9]{LenzingReiten2006}, we know that $\mathcal{A}^{G}$ satisfies $(\textup{H\;5})$ automatically.

Finally, we prove that $\mathcal{A}^{G}$ satisfies $(\textup{H\;6})$. By assumption $G$ is a finite subgroup of $\Aut(\mathcal{A})$, hence each $g\in G$ defines an automorphism of $\mathcal{A}$, which restricts to an automorphism of $\mathcal{A}_{0}$. Recall that { $\mathcal{A}_{0}=\coprod_{x\in C}\mathcal{U}_{x}$}. Hence each $g\in G$ induces a permutation of $C$. Since $G$ is finite, it follows that $\mathcal{A}^{G}$ has infinitely many points. This finishes the proof.
\end{proof}

\section{Weighted projective lines and proof of the main results}

This section is devoted to proving Theorem \ref{trichotomy} and Proposition \ref{change of weight}. For these we need to recall the category of coherent sheaves over weighted projective lines in more details, including the structure of its automorphism group.

\subsection{The string group $\bbL(\mathbf{p})$}
{Let $t\geq 0$ be an integer. Let ${\bf p}=(p_1, p_2, \cdots, p_t)$ be a sequence
of integers with each $p_i\geq 2$.
The \emph{string group} $\bbL({\bf p})$ of type ${\bf p}$ is an abelian group (written additively) on generators $\vec{x}_1, \vec{x}_2, \cdots, \vec{x}_t$, subject to the relations $p_1\vec{x}_1=p_2\vec{x}_2=\cdots = p_t\vec{x}_t$, where this common element is denoted by $\vec{c}$ and called the \emph{canonical element} of $\bbL(\mathbf{p})$.
The string group $\bbL(\mathbf{p})$ has rank one, where $\vec{c}$ has infinite order. There is an isomorphism of abelian groups
\begin{align}\label{isomorphism of L} \bbL(\mathbf{p})/\mathbb{Z}\vec{c}\stackrel{\sim}\longrightarrow \prod_{i=1}^t \mathbb{Z}/p_i\mathbb{Z},\end{align}
sending $\vec{x}_i+\mathbb{Z}\vec{c}$ to the vector $(0, \cdots,0,  \bar{1}, 0, \cdots, 0)$ with $\bar{1}$ on the $i$-th component. Using (\ref{isomorphism of L}), each element $\vec{x}$ in $\bbL(\mathbf{p})$ can be uniquely written in its \emph{normal form}
\begin{align}\label{equ:nor}
\vec{x}=\sum_{i=1}^t l_i\vec{x}_i+l\vec{c},
\end{align}
where $0\leq l_i\leq p_i-1$ for $1\leq i\leq t$ and $l\in \mathbb{Z}$.

The \emph{torsion subgroup} $t\bbL(\mathbf{p})$ of $\bbL(\mathbf{p})$ consists of all the torsion elements $\vec{x}$ in $\bbL(\mathbf{p})$, i.e., $n\vec{x}=0$ for some nonzero integer $n$. Denote by $p={\rm l.c.m.}(p_1,p_2,\cdots,p_t)$. There is a surjective group homomorphism $\delta\colon \bbL(\mathbf{p})\rightarrow \mathbb{Z}$ given by $\delta(\vec{x}_i)=\frac{p}{p_i}$ for $1\leq i\leq t$.
The following description of the torsion group $t\bbL(\mathbf{p})$ plays an important role in this paper.

\begin{lem}\label{torsiongp}
Let ${\bf{p}}=(p_1,\cdots,p_t)$. Denote by $p=\textup{l.c.m.}(p_1,\cdots,p_t)$ and $d_{ij}=\textup{g.c.d.}(p_i, p_j)$ for any $1\leq i\neq j\leq t$. Then the following hold:

\begin{itemize}
\item[$(1)$] $\vec{x}\in t\bbL({\bf{p}})$ if and only if $p\vec{x}=0$;
\item[$(2)$] there is an exact sequence $0\rightarrow t\mathbb{L}(\mathbf{p})\rightarrow \mathbb{L}(\mathbf{p})\xrightarrow{\delta}\mathbb{Z}\rightarrow 0$;
\item[$(3)$] $|t\bbL({\bfp})|=\frac{p_1p_2\cdots p_t}{p}$;
\item[$(4)$] $t\bbL({\bf{p}})=\langle\frac{p_i}{d_{ij}}\vec{x}_i-\frac{p_j}{d_{ij}}\vec{x}_j|1\leq i<j\leq t\rangle$.
\end{itemize}
\end{lem}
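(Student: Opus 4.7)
My plan is to use the homomorphism $\delta\colon\bbL(\mathbf{p})\to\mathbb{Z}$ defined by $\vec{x}_i\mapsto p/p_i$ as the central tool. It is well-defined since $\delta(p_i\vec{x}_i)=p$ is independent of $i$, and surjective since $\gcd(p/p_1,\ldots,p/p_t)=p/\operatorname{l.c.m.}(p_1,\ldots,p_t)=1$.

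For parts $(1)$ and $(2)$, I would first note that $t\bbL(\mathbf{p})\subseteq\ker\delta$ because $\mathbb{Z}$ is torsion-free. Conversely, for $\vec{x}=\sum_i l_i\vec{x}_i+l\vec{c}$ written in normal form, multiplying out $p\vec{x}$ using $p\vec{x}_i=(p/p_i)\vec{c}$ gives $p\vec{x}=\delta(\vec{x})\cdot\vec{c}$. Hence $\delta(\vec{x})=0$ forces $p\vec{x}=0$, which simultaneously yields $\ker\delta=t\bbL(\mathbf{p})$ (establishing the exact sequence in $(2)$) and the forward direction of $(1)$; the backward direction of $(1)$ is immediate from the definition of a torsion element.

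For part $(3)$, I would use a tower argument on the subgroup $\mathbb{Z}\vec{c}+t\bbL(\mathbf{p})\subseteq\bbL(\mathbf{p})$. Since $\vec{c}$ has infinite order, $\mathbb{Z}\vec{c}\cap t\bbL(\mathbf{p})=0$, so the sum is internal direct and $|(\mathbb{Z}\vec{c}+t\bbL(\mathbf{p}))/\mathbb{Z}\vec{c}|=|t\bbL(\mathbf{p})|$. On the other hand, $\bbL(\mathbf{p})/(\mathbb{Z}\vec{c}+t\bbL(\mathbf{p}))$ is a quotient of $\bbL(\mathbf{p})/t\bbL(\mathbf{p})\cong\mathbb{Z}$ by $\delta(\mathbb{Z}\vec{c})=p\mathbb{Z}$, hence has order $p$. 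Combining these with $|\bbL(\mathbf{p})/\mathbb{Z}\vec{c}|=p_1\cdots p_t$ from \eqref{isomorphism of L} gives $|t\bbL(\mathbf{p})|=p_1\cdots p_t/p$.

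Part $(4)$ is the main obstacle. Each proposed generator satisfies $\delta\bigl(\frac{p_i}{d_{ij}}\vec{x}_i-\frac{p_j}{d_{ij}}\vec{x}_j\bigr)=p/d_{ij}-p/d_{ij}=0$, so lies in $t\bbL(\mathbf{p})$ by part $(2)$. For the reverse inclusion, I would transport the problem via the injection $t\bbL(\mathbf{p})\hookrightarrow\bbL(\mathbf{p})/\mathbb{Z}\vec{c}\cong\prod_{i=1}^{t}\mathbb{Z}/p_i\mathbb{Z}$, under which $t\bbL(\mathbf{p})$ is identified with the kernel of $\psi\colon\prod\mathbb{Z}/p_i\mathbb{Z}\to\mathbb{Z}/p\mathbb{Z}$, $(l_1,\ldots,l_t)\mapsto\sum l_i(p/p_i)\bmod p$. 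The task then reduces to showing that the images of the $\frac{p_i}{d_{ij}}\vec{x}_i-\frac{p_j}{d_{ij}}\vec{x}_j$ span $\ker\psi$. I would proceed by induction on $t$: the base $t=2$ follows because $d_{12}\bigl(\frac{p_1}{d_{12}}\vec{x}_1-\frac{p_2}{d_{12}}\vec{x}_2\bigr)=p_1\vec{x}_1-p_2\vec{x}_2=0$ shows this single generator has order $d_{12}=|t\bbL(p_1,p_2)|$ by $(3)$. For the inductive step, one uses $\frac{p_i}{d_{it}}\vec{x}_i-\frac{p_t}{d_{it}}\vec{x}_t$ for $i<t$ to annihilate the $t$-th coordinate of any element of $\ker\psi$; the key number-theoretic identity is $\operatorname{l.c.m.}_{i<t}d_{it}=\gcd\bigl(p_t,\operatorname{l.c.m.}(p_1,\ldots,p_{t-1})\bigr)$ (verified prime-by-prime from $\max_i\min(a_i,b)=\min(\max_i a_i,b)$), which ensures that the divisibility constraint $\psi$ imposes on the $t$-th coordinate is exactly what the available generators can produce. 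After this reduction, the remaining tuple lies in the kernel of the analogous map for $(p_1,\ldots,p_{t-1})$, and induction finishes the proof.
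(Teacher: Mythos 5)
Your proposal is correct and follows essentially the same route as the paper: (1)--(3) via the homomorphism $\delta$ and the index of $\mathbb{Z}\vec{c}$ in $\mathbb{L}(\mathbf{p})$, and (4) by induction on $t$, clearing the $t$-th coordinate of a torsion element using the generators $\frac{p_i}{d_{it}}\vec{x}_i-\frac{p_t}{d_{it}}\vec{x}_t$. The only cosmetic difference is in step (4): where you obtain the needed coefficient from $\gcd_{i<t}(p_t/d_{it})=p_t/d$ via the identity $\operatorname{l.c.m.}_{i<t}d_{it}=\gcd\big(p_t,\operatorname{l.c.m.}(p_1,\dots,p_{t-1})\big)$, the paper realizes the same combination explicitly through a B\'ezout identity built from a pairwise coprime factorization $d=d_1\cdots d_{t-1}$ with $d_i\mid p_i$.
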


\begin{proof}
(1) and (2) are taken from \cite[1.2]{GeigleLenzing1987}. Consequently, there is an induced exact sequence
$$0\rightarrow t\mathbb{L}(\mathbf{p})\rightarrow \mathbb{L}(\mathbf{p})/\mathbb{Z}\vec{c} \xrightarrow{\delta}\mathbb{Z}/p\mathbb{Z}\rightarrow 0;$$
hence (3) follows from \eqref{isomorphism of L} immediately; compare \cite{Chenxiaowu2015}.

In the following we prove (4). Denote by $H=\langle\frac{p_i}{d_{ij}}\vec{x}_i-\frac{p_j}{d_{ij}}\vec{x}_j|1\leq i<j\leq t\rangle$. It is obvious that $H\subseteq t\bbL(\bfp).$ We will show $t\bbL(\bfp)\subseteq H$ by induction on $t$. If $t\leq 2$, it is trivial.
We assume that $t>2$ from now on. Let ${\bf{p}'}=(p_1,\cdots,p_{t-1})$. We denote by $p'=\textup{l.c.m.}(p_1,\cdots,p_{t-1})$ and $d=\textup{g.c.d.}(p_t,p')$. Then clearly we have $p=\frac{p'p_t}{d}$.

For any $\vec{x}\in t\bbL({\bfp})$, we can write $\vec{x}=a\vec{x}_t+\vec{y}$ with $\vec{y}\in \bbL({\bf{p}'})$. If $a=0$, then $\vec{x}=\vec{y}\in t\bbL({\bfp}')\subseteq H$ by induction, we are done. We assume $1\leq a\leq p_t-1$ in the following. By (1), $$0=p\vec{x}=\frac{p'p_t}{d}\vec{x}=\frac{ap'p_t}{d}\vec{x}_t+\frac{p'p_t}{d}\vec{y}
=\frac{ap'}{d}\vec{c}+\frac{p_t}{d}(p'\vec{y}).$$ Since $p'\vec{y}\in \bbZ\vec{c}$, we have $\frac{p_t}{d}|\frac{ap'}{d}$. Note that $\textup{g.c.d.}(\frac{p_t}{d},\frac{p'}{d})=1$, hence $\frac{p_t}{d}|a$, i.e., $p_t|da$.

Write $d=\Pi_{j\in J}q_j^{l_j}$, where $q_j$'s are pairwise distinct primes. For any $1\leq i\leq t-1$, let $J_i=\{j\in J\,\big|\;q_j^{l_j}\mid p_i, \textup{\ and\ } q_j^{l_j}\nmid p_k \textup{\ for\ any\ } k<i\}$ and take $d_i=\Pi_{j\in J_i}q_j^{l_j}$. Then $d_i|p_i$ and $\textup{g.c.d.}(d_i,d_j)=1$ for $1\leq i\neq j\leq t-1$, and $d=d_1d_2\cdots d_{t-1}$.

Since $\textup{g.c.d.}(d_i,d_j)=1$, we have $\textup{g.c.d.}(\frac{d}{d_1},\frac{d}{d_2},\cdots,\frac{d}{d_{t-1}})=1$. It follows that there exist $u_i$ ($1\leq i\leq t-1$) such that $\sum_{i=1}^{t-1}u_i\frac{d}{d_i}=1$. Let $\vec{y}_0=\sum_{i=1}^{t-1}u_i\frac{p_i}{d_i}\vec{x}_i$, then
\begin{align*}
\vec{y}_0-\frac{p_t}{d}\vec{x}_t&=\sum_{i=1}^{t-1}u_i\frac{p_i}{d_i}\vec{x}_i-\sum_{i=1}^{t-1}\frac{u_id}{d_i}\cdot\frac{p_t}{d}\vec{x}_t
=\sum_{i=1}^{t-1}\frac{u_id_{it}}{d_i}(\frac{p_i}{d_{it}}\vec{x}_i-\frac{p_t}{d_{it}}\vec{x}_t)\in H.
\end{align*}

Observe that $\vec{x}=-\frac{da}{p_t}(\vec{y}_0-\frac{p_t}{d}\vec{x}_t)+\vec{z}$, where $\vec{z}=\vec{y}+\frac{da}{p_t}\vec{y}_0$ (recall that $p_t|da$). Therefore, $\vec{x}\in t\bbL({\bfp})$ implies $\vec{z}\in t\bbL({\bfp'})$. Then by induction we get $\vec{z}\in H$, which implies $\vec{x}\in H$. This finishes the proof.
\end{proof}

\subsection{Weighted projective lines}
Let ${\boldsymbol\lambda}=(\lambda_1, \lambda_2, \cdots, \lambda_t)$ be a sequence of
pairwise distinct points on the projective line $\mathbb{P}_{\mathbf k}^1$. Such a sequence can be normalized such that $\lambda_1=\infty$, $\lambda_2=0$ and $\lambda_3=1$.
A \emph{weighted projective line} $\mathbb{P}_{\mathbf k}^1(\mathbf{p}; {\boldsymbol\lambda})$ of weight type $\mathbf{p}$ and parameter sequence ${\boldsymbol\lambda}$ is obtained from $\mathbb{P}_{\mathbf k}^1$ by attaching the weight $p_i$ to each point $\lambda_i$ for $1\leq i\leq t$. We will always assume ${\boldsymbol\lambda}$ is normalized in this paper unless stated otherwise,
and simply write $\mathbb{P}_{\mathbf k}^1(\bf p; {\boldsymbol\lambda})=\mathbb{P}_{\mathbf k}^1(\bf p)$ for $t\leq 3$.

The \emph{homogeneous coordinate algebra} $S(\mathbf{p}; {\boldsymbol\lambda})$ of the weighted projective line $\mathbb{P}_{\mathbf k}^1(\mathbf{p};{\boldsymbol\lambda})$ is given by $\mathbf{k}[X_1, X_2, \cdots, X_t]/I$, where the ideal $I$ is generated by $X_i^{p_i}-(X_2^{p_2}-\lambda_iX_1^{p_1})$ for $3\leq i\leq t$. We write $x_i=X_i+I$ in $S(\mathbf{p}; {\boldsymbol\lambda})$.
The algebra $S(\mathbf{p}; {\boldsymbol\lambda})$ is $\bbL(\mathbf{p})$-graded by means of $\deg (x_i)=\vec{x}_i$.

We recall a more convenient description of the category ${\rm coh}\mbox{-}\mathbb{P}_{\mathbf k}^1(\mathbf{p};{\boldsymbol\lambda})$ of coherent sheaves over $\mathbb{P}_{\mathbf k}^1(\mathbf{p};{\boldsymbol\lambda})$ via graded $S(\mathbf{p}; {\boldsymbol\lambda})$-modules. Denote by ${\rm mod}^{\bbL(\mathbf{p})}\mbox{-}S(\mathbf{p}; {\boldsymbol\lambda})$ the abelian category of finitely generated $\bbL(\mathbf{p})$-graded $S(\mathbf{p}; {\boldsymbol\lambda})$-modules, and by ${\rm mod}_0^{\bbL(\mathbf{p})}\mbox{-}S(\mathbf{p}; {\boldsymbol\lambda})$ its Serre subcategory formed by finite dimensional modules. We denote the quotient abelian category by $${\rm qmod}^{\bbL(\mathbf{p})}\mbox{-}S(\mathbf{p}; {\boldsymbol\lambda}):=\frac{{\rm mod}^{\bbL(\mathbf{p})}\mbox{-}S(\mathbf{p}; {\boldsymbol\lambda})}{{{\rm mod}_0^{\bbL(\mathbf{p})}\mbox{-}S(\mathbf{p}; {\boldsymbol\lambda})}}.$$ By \cite[Theorem 1.8]{GeigleLenzing1987} the sheafification functor yields an equivalence
$$
{\rm qmod}^{\bbL(\mathbf{p})}\mbox{-}S(\mathbf{p}; {\boldsymbol\lambda})\stackrel{\sim}\longrightarrow {\rm coh}\mbox{-}\mathbb{P}_{\mathbf k}^1(\mathbf{p};{\boldsymbol\lambda}).
$$
From now on we will identify these two categories.}

{ The \emph{Euler characteristic} of a weighted projective line $\mathbb{P}_{\mathbf k}^1(\bf p; {\boldsymbol\lambda})$ is defined as $$\chi_{\mathbb{P}_{\mathbf k}^1(\bf p; {\boldsymbol\lambda})}=2-\sum_{i=1}^t(1-\frac{1}{p_i})=-\frac{\delta(\vec{\omega})}{p},$$
where $\vec{\omega}=(t-2)\vec{c}-\sum_{i=1}^t \vec{x}_i$ is the \emph{dualizing element} of $\bbL(\mathbf{p})$. The weighted projective line $\mathbb{P}_{\mathbf k}^1(\bf p; {\boldsymbol\lambda})$ is called of \emph{domestic, tubular} or \emph{wild} type provided that $\chi_{\mathbb{P}_{\mathbf k}^1(\bf p; {\boldsymbol\lambda})}>0$, $\chi_{\mathbb{P}_{\mathbf k}^1(\bf p; {\boldsymbol\lambda})}=0$ or $\chi_{\mathbb{P}_{\mathbf k}^1(\bf p; {\boldsymbol\lambda})}<0$, respectively. } More precisely, we have the following trichotomy according to the weight type $\mathbf{p}$ (up to permutation of weights):
\begin{itemize}
  \item [(i)] domestic type: $(), (p_1), (p_{1},p_{2})$, $(2,2,p_3)$, $(2,3,3)$, $(2,3,4)$ and $(2,3,5)$;
  \item [(ii)] tubular type: $(2,2,2,2)$, $(3,3,3)$, $(4,4,2)$  and $(6,3,2)$;
  \item [(iii)] wild type: all the other cases.
\end{itemize}

\subsection{Automorphism group for weighted projective lines}

The automorphism group $\Aut({\rm coh}\mbox{-}\mathbb{P}_{\mathbf k}^1(\mathbf{p};{\boldsymbol\lambda}))$ of the category ${\rm coh}\mbox{-}\mathbb{P}_{\mathbf k}^1(\mathbf{p};{\boldsymbol\lambda})$ consists of isomorphism classes of $\mathbf k$-linear self-equivalences on ${\rm coh}\mbox{-}\mathbb{P}_{\mathbf k}^1(\mathbf{p}; {\boldsymbol\lambda})$.
Let $\Aut(\mathbb{P}_{\mathbf k}^1(\mathbf{p};{\boldsymbol\lambda}))$ be the subgroup of $\Aut({\rm coh}\mbox{-}\mathbb{P}_{\mathbf k}^1(\mathbf{p};{\boldsymbol\lambda}))$ of automorphisms fixing the structure sheaf.
By \cite[Proposition 3.1]{LenzingMelter2000}, $\Aut(\mathbb{P}_{\mathbf k}^1(\mathbf{p};{\boldsymbol\lambda}))$ is isomorphic to the subgroup of $\textup{PSL}(2,\mathbf k)$ of automorphisms of $\mathbb{P}_{\mathbf k}^1$ preserving weights. If $\mathbb{P}_{\mathbf k}^1(\mathbf{p};{\boldsymbol\lambda})$ has $t\geq 3$ exceptional points,
$\textup{Aut}(\mathbb{P}_{\mathbf k}^1(\mathbf{p};{\boldsymbol\lambda}))$ is finite since it is isomorphic to a subgroup of a direct product of some symmetric group by \cite[Corollary 3.2]{LenzingMelter2000}. In particular,
 if the weights $p_1,\cdots,p_t$ are pairwise distinct, then $\textup{Aut}(\mathbb{P}_{\mathbf k}^1(\mathbf{p};{\boldsymbol\lambda}))$ is trivial.

 We have the following description of the automorphism group $\Aut({\rm coh}\mbox{-}\mathbb{P}_{\mathbf k}^1(\mathbf{p};{\boldsymbol\lambda}))$ for a weighted projective line.

\begin{prop}\cite[Theorem 3.4]{LenzingMelter2000}
{There is a split-exact sequence
\begin{align}\label{automorphism group}1\rightarrow \mathbb{L}(\mathbf{p})\rightarrow{\Aut}({\rm coh}\mbox{-}\mathbb{P}_{\mathbf k}^1(\mathbf{p}; {\boldsymbol\lambda}))\rightarrow{\Aut}(\mathbb{P}_{\mathbf k}^1(\mathbf{p}; {\boldsymbol\lambda}))\rightarrow 1.
\end{align}
In particular, $$\Aut({\rm coh}\mbox{-}\mathbb{P}_{\mathbf k}^1(\mathbf{p};{\boldsymbol\lambda}))= \mathbb{L}(\mathbf{p})\rtimes {\Aut}(\mathbb{P}_{\mathbf k}^1(\mathbf{p};{\boldsymbol\lambda})).$$}
\end{prop}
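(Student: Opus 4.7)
The plan is to build the two maps in the sequence, establish exactness, and produce a splitting. First I would construct the left map $\bbL(\mathbf{p})\to \Aut(\coh\mathbb{P}_{\mathbf k}^1(\mathbf{p};\boldsymbol\lambda))$. For each $\vec{x}\in\bbL(\mathbf{p})$, the degree-shift $M\mapsto M(\vec{x})$ on $\bbL(\mathbf{p})$-graded $S(\mathbf{p};\boldsymbol\lambda)$-modules descends through the sheafification equivalence to a $\mathbf k$-linear autoequivalence of $\coh\mathbb{P}_{\mathbf k}^1(\mathbf{p};\boldsymbol\lambda)$. That this is a group homomorphism is immediate; injectivity reduces to the fact that the structure sheaf $\mathcal{O}$ and its shifts $\mathcal{O}(\vec{x})$ are pairwise non-isomorphic, i.e., that $\mathrm{Pic}(\mathbb{P}_{\mathbf k}^1(\mathbf{p};\boldsymbol\lambda))\cong\bbL(\mathbf{p})$, which follows from the grading on $S(\mathbf{p};\boldsymbol\lambda)$ and the structure of rank-one projectives.

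Next I would set up the surjection onto $\Aut(\mathbb{P}_{\mathbf k}^1(\mathbf{p};\boldsymbol\lambda))$. Given any $F\in\Aut(\coh\mathbb{P}_{\mathbf k}^1(\mathbf{p};\boldsymbol\lambda))$, the functor $F$ preserves rank and Euler form, hence sends line bundles to line bundles. Thus $F(\mathcal{O})\cong \mathcal{O}(\vec{x})$ for a unique $\vec{x}\in\bbL(\mathbf{p})$, and $F':=(-)(-\vec{x})\circ F$ fixes $\mathcal{O}$ up to isomorphism. Any autoequivalence fixing $\mathcal{O}$ acts on the torsion subcategory $\mathcal{A}_0=\coprod_{x\in\bbP^1}\mathcal{U}_x$, permuting tubes; since the rank of $\mathcal{U}_x$ equals $\omega(x)$, this permutation preserves weights, and since the ordinary simples at smooth points $x$ are parameterized by $\bbP^1\setminus\{\lambda_1,\ldots,\lambda_t\}$, the induced permutation arises from a unique automorphism of the underlying projective line. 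This produces the projection $F\mapsto\bar{F}'$ into $\Aut(\mathbb{P}_{\mathbf k}^1(\mathbf{p};\boldsymbol\lambda))\subseteq \mathrm{PSL}(2,\mathbf k)$.

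For the splitting, any $\sigma\in\Aut(\mathbb{P}_{\mathbf k}^1(\mathbf{p};\boldsymbol\lambda))$ permutes the weighted points of equal weight, and a lift to a graded $\mathbf k$-algebra automorphism of $S(\mathbf{p};\boldsymbol\lambda)$ is obtained by suitably permuting and rescaling the generators $x_1,\ldots,x_t$ so that the relations $x_i^{p_i}=x_2^{p_2}-\lambda_i x_1^{p_1}$ are preserved. Passing to the quotient category $\mathrm{qmod}^{\bbL(\mathbf{p})}\mbox{-}S(\mathbf{p};\boldsymbol\lambda)\simeq\coh\mathbb{P}_{\mathbf k}^1(\mathbf{p};\boldsymbol\lambda)$ gives a section of the projection whose image consists of autoequivalences fixing $\mathcal{O}$, yielding the semidirect product decomposition.

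The main obstacle will be the middle step: identifying, up to degree shift, \emph{every} autoequivalence fixing $\mathcal{O}$ with a geometric automorphism. One must argue that the natural action on closed points together with the action on $\mathrm{End}(\mathcal{O})=\mathbf k$ and on the Picard group determines $F'$ uniquely, and then produce from this data an honest graded automorphism of the homogeneous coordinate algebra $S(\mathbf{p};\boldsymbol\lambda)$. This requires combining Serre's theorem (to reconstruct $S(\mathbf{p};\boldsymbol\lambda)$ from the pair $(\coh\mathbb{P}_{\mathbf k}^1(\mathbf{p};\boldsymbol\lambda),\mathcal{O})$) with the rigidity of the configuration of exceptional tubes; once this is in place, exactness of the sequence and the semidirect-product conclusion follow formally.
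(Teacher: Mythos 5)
The paper itself offers no proof of this proposition: it is quoted verbatim from \cite[Theorem 3.4]{LenzingMelter2000}, so there is no in-text argument to compare against. Your outline essentially reproduces the argument of that reference --- degree shifts embed $\bbL(\mathbf{p})$ via ${\rm Pic}$, every autoequivalence sends $\mathcal{O}$ to a line bundle $\mathcal{O}(\vec{x})$ and can be normalised to fix $\mathcal{O}$, and the subgroup fixing $\mathcal{O}$ (which is precisely how the paper defines $\Aut(\mathbb{P}_{\mathbf k}^1(\mathbf{p};{\boldsymbol\lambda}))$) is identified with the weight-preserving M\"obius transformations, giving exactness and the splitting at once. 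The step you flag as the main obstacle is indeed the substance of \cite[Proposition 3.1]{LenzingMelter2000}, and one point there deserves more care than your sketch gives it: to reconstruct a graded automorphism of $S(\mathbf{p};{\boldsymbol\lambda})$ from an autoequivalence $F'$ fixing $\mathcal{O}$, you first need that $F'$ induces a \emph{group} automorphism of $\bbL(\mathbf{p})$, not merely a bijection of the set of line bundles fixing $\mathcal{O}$. Lenzing--Meltzer obtain this from the fact that $F'$ preserves the partial order $\vec{x}\leq\vec{y}\Leftrightarrow\Hom(\mathcal{O}(\vec{x}),\mathcal{O}(\vec{y}))\neq 0$, together with the rigidity result that an order automorphism of $\bbL(\mathbf{p})$ fixing $0$ is additive and permutes the generators $\vec{x}_i$ of equal weight. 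This additivity is also what makes the image of $\bbL(\mathbf{p})$ normal in $\Aut(\coh\mathbb{P}_{\mathbf k}^1(\mathbf{p};{\boldsymbol\lambda}))$, a fact your sketch uses implicitly for exactness in the middle but never verifies.
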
}

The automorphisms from $\Aut(\mathbb{P}_{\mathbf k}^1(\mathbf{p};{\boldsymbol\lambda}))$ or $\bbL({\bf p})$ are  called \emph{geometric automorphisms} or \emph{degree-shift automorphisms} respectively in this paper.

\subsection{Proof of Theorem \ref{trichotomy}}

According to Theorem \ref{main result}, we know that $\mathbf{Y}$ is a weighted projective curve. In the following we assume $\mathbf{Y}=\mathbb{Y}(\mathbf{q};{\boldsymbol\mu})$ with $\mathbf{q}=(q_1,q_2,\cdots, q_{t'})$.

Let $H^{\ast}$ be the {{character}} group of $H$. Since $H$ is commutative, by \cite[Theorem 4.6]{CCR} there exists an $H^{\ast}$-action on $\coh\mathbf{Y}$ with an equivalence $$({\rm coh}\mbox{-}\mathbf{Y})^{H^{\ast}}\stackrel{\sim}\longrightarrow {\rm coh}\mbox{-}\mathbb{P}_{\mathbf k}^1(\mathbf{p};{\boldsymbol\lambda}).$$ Moreover, by the definition of $H^{\ast}$-action on $\coh\mathbf{Y}$ (see \cite[Section 4.3]{CCR}), we know that $H^{\ast}$ fixes the structure sheaf of $\mathbf{Y}$, hence $H^{\ast}$ acts on the curve $\mathbf{Y}$ with a quotient stack $\mathbf{Y}/H^{\ast}\cong \mathbb{P}_{\mathbf k}^1(\mathbf{p};{\boldsymbol\lambda})$. Then by Theorem 2 and (2.3) in \cite{Lenzing2017},
\begin{align}\label{Hurwitz formula application2}|H^{\ast}|\cdot\chi_{\mathbb{P}_{\mathbf k}^1(\mathbf{p}; {\boldsymbol\lambda})}=\chi_{\mathbf{Y}}=\chi_{\bbY}-\sum\limits_{i=1}^{t'}(1-\frac{1}{q_i})= 2(1-g_{\bbY})-\sum\limits_{i=1}^{t'}(1-\frac{1}{q_i}).
\end{align}

Therefore, $\mathbb{P}_{\mathbf k}^1(\mathbf{p};{\boldsymbol\lambda})$ is of domestic type, i.e., $\chi_{\mathbb{P}_{\mathbf k}^1(\mathbf{p};{\boldsymbol\lambda})}>0$, if and only if $g_{\mathbb{Y}}=0$ and $\sum\limits_{i=1}^{t'}(1-\frac{1}{q_i})<2$, or equivalently, $\mathbf{Y}$ is a weighted projective line of domestic type. $\mathbb{P}_{\mathbf k}^1(\mathbf{p};{\boldsymbol\lambda})$ is of tubular type, i.e., $\chi_{\mathbb{P}_{\mathbf k}^1(\mathbf{p};{\boldsymbol\lambda})}=0$, if and only if one of the following holds: $$(i)\, g_{\mathbb{Y}}=0\;\text{and}\; \sum\limits_{i=1}^{t'}(1-\frac{1}{q_i})=2; \quad (ii)\, g_{\mathbb{Y}}=1\;\text{and}\; \sum\limits_{i=1}^{t'}(1-\frac{1}{q_i})=0.$$
Hence $\mathbf{Y}$ is a weighted projective line of tubular type or a smooth elliptic curve respectively.
$\mathbb{P}_{\mathbf k}^1(\mathbf{p}; {\boldsymbol\lambda})$ is of wild type if and only if $\chi_{\mathbb{P}_{\mathbf k}^1(\mathbf{p}; {\boldsymbol\lambda})}<0$, if and only if one of the following holds: $$(i)\, g_{\mathbb{Y}}=0\;\text{and}\; \sum\limits_{i=1}^{t'}(1-\frac{1}{q_i})>2; \quad (ii)\, g_{\mathbb{Y}}=1\;\text{and}\; \sum\limits_{i=1}^{t'}(1-\frac{1}{q_i})>0; \quad (iii)\, g_{\mathbb{Y}}\geq 2.$$ That is, $\mathbf{Y}$ is a weighted projective line of wild type, or a (non-trivial) weighted elliptic curve, or a weighted projective curve with $g_{\mathbb{Y}}\geq 2$ respectively.
This proves the trichotomy result of Theorem \ref{trichotomy}.

Now we will classify all the curves $\mathbf{Y}=\mathbb{Y}(\mathbf{q};{\boldsymbol\mu})$ when $\mathbb{P}_{\mathbf k}^1(\mathbf{p}; {\boldsymbol\lambda})$ is of domestic type or of tubular type. By explicit description on the torsion group $t\bbL(\bfp)$ in Lemma \ref{torsiongp}, we can obtain all the possibilities of $H\leq t\bbL(\bfp)$ as listed in Tables \ref{table 1} and \ref{table 2}.
By using Proposition \ref{change of weight} we can obtain the data $g_{\bbY}$ and $\bfq$ for each $H$.
This already finishes the classification of $\mathbf{Y}$'s when $\mathbb{P}_{\mathbf k}^1(\mathbf{p}; {\boldsymbol\lambda})$ is of domestic type, since domestic weighted projective lines are determined by their weight data $\mathbf{p}$ (by normalization assumption on parameters ${\boldsymbol\lambda}$); compare with \cite[Table 1]{CLRZ}.

If $\mathbb{P}_{\mathbf k}^1(\mathbf{p}; {\boldsymbol\lambda})$ is of tubular type, we still need to deal with the parameter ${\boldsymbol\mu}$ for $\mathbf{Y}=\mathbb{Y}(\mathbf{q};{\boldsymbol\mu})$. In this case, it is easy to check case by case that any finite subgroup $H$ of $\bbL(\bf p)$ satisfies
$H\supseteq\bbZ\vec{\omega}$ or $H$ is of Cyclic type or Klein type in the sense of \cite{CLRZ}.
According to the trichotomy result above and \cite[Proposition 3.8]{CLRZ}, we know that $\mathbf{Y}=\mathbb{Y}(\mathbf{q};{\boldsymbol\mu})$ is an smooth elliptic curve for the first case, and $\mathbf{Y}$ is a weighted projective line of tubular type for the remaining two cases, where the parameter data has been determined by \cite[Table 3]{CLRZ}.

For $H=\bbZ\vec{\omega}$, the parameter of $\mathbf{Y}=\mathbb{Y}(\mathbf{q};{\boldsymbol\mu})$ has been calculated by \cite[Section 7]{CCZ}. To finish the proof we remain to determine ${\boldsymbol\mu}$ when $({\bf{p}},H)$ are given by the following:
$$\begin{array}{ll}
(\textup{i})\, \big((2,2,2,2),\langle\vec{x}_1-\vec{x}_2,\vec{x}_3-\vec{x}_4\rangle\big),&(\textup{ii})\, \big((2,2,2,2),t\bbL(2,2,2,2)\big),\\
(\textup{iii})\, \big((4,4,2),t\bbL(4,4,2)\big),&(\textup{iv})\, \big((3,3,3),t\bbL(3,3,3)\big).
\end{array}$$
In the following we only consider the case (iv), since the other cases can be treated similarly.

Let ${\bf{p}}=(3,3,3)$ and $S({\bf{p}})={\bf{k}}[X_1,X_2,X_3]/(f)$, where $f=X_1^3-X_2^3+X_3^3$. Let $x_i=X_i+(f)$ in $S({\bf{p}})$ for $i=1,2,3$. Recall that $S({\bf{p}})$ is an $\bbL({\bf{p}})$-graded algebra with $\textup{deg}(x_i)=\vec{x}_i$ for $i=1,2,3$. Note that $\bbL({\bf{p}})=\bbZ\vec{x}_1\oplus t\bbL({\bf{p}})$. Hence $\bbL({\bf{p}})/t\bbL({\bf{p}})\cong \mathbb{Z}$. We define a $\mathbb{Z}$-graded algebra $\overline{S({\bf{p}})}$ as follows: as an ungraded algebra $\overline{S({\bf{p}})}=S({\bf{p}})$, while its homogeneous component is given by $\overline{S({\bf{p}})}_n=\oplus_{\vec{y}\in t\bbL({\bf{p}})}S({\bf{p}})_{n\vec{x}_1+\vec{y}}$ for each $n\in\mathbb{Z}$. By \cite[Proposition 5.2]{CCZ} we have an equivalence of categories
$$\big(\mod^{\bbL({\bf{p}})}S({\bf{p}})\big)^{t\bbL({\bf{p}})}\stackrel{\sim}\longrightarrow\mod^{\mathbb{Z}}\overline{S({\bf{p}})},$$
which restricts to an equivalence $\big(\mod_0^{\bbL({\bf{p}})}S({\bf{p}})\big)^{t\bbL({\bf{p}})}\stackrel{\sim}\longrightarrow\mod_0^{\mathbb{Z}}\overline{S({\bf{p}})}$ of full subcategories consisting of finite dimensional modules. Applying \cite[Corollary 4.4]{CCZ} we have an equivalence of categories\begin{align}\big(\text{qmod}^{\bbL({\bf{p}})}S({\bf{p}})\big)^{t\bbL({\bf{p}})}\stackrel{\sim}\longrightarrow\text{qmod}^{\mathbb{Z}}\overline{S({\bf{p}})}.
\end{align} That is,
\begin{align}({\rm coh}\mbox{-}\mathbb{P}_{\mathbf k}^1(\mathbf{p}; {\boldsymbol\lambda}))^{t\bbL({\bf{p}})}\stackrel{\sim}\longrightarrow {\rm coh}\mbox{-}\bf{E},\end{align}
where $\bf{E}$ is an elliptic curve defined by the Fermat equation $x_1^3-x_2^3+x_3^3=0$.

It is well-known that the elliptic curve $\bf{E}$ has the normal form $y^2-y=x^3-7$ (c.f. \cite[Example 2.5 in Section 1]{Husemoller2004}), hence the $j$-invariant $j(\bf{E})=0$. By assumption, $\omega^2-\omega+1=0$. We know that $$j(\mathbb{E}(\omega))=\frac{2^8(\omega^2-\omega+1)^3}{\omega^2(\omega-1)^2}=0.$$
It follows from \cite[Proposition 1.5 in Section 4]{Husemoller2004} that $\bf{E}\cong \bbE(\mu)$ for any $\mu\in\Gamma(\omega)$. This finishes the proof.

\subsection{Proof of Proposition \ref{change of weight}}

Let ${\rm coh}_0\mbox{-}\mathbb{P}_{\mathbf k}^1(\mathbf{p};{\boldsymbol\lambda})=\coprod_{x\in \mathbb{P}_{\mathbf k}^1}\mathcal{U}_{x}$, where each $\mathcal{U}_{\lambda_i}$ for $1\leq i\leq t$ is a stable tube $\mathcal{T}_{p_i}$ of rank $p_i$, and other $\mathcal{U}_{x}$'s are stable tubes of rank $1$. Note that the degree-shift action of $H$ on ${\rm coh}\mbox{-}\mathbb{P}_{\mathbf k}^1(\mathbf{p};{\boldsymbol\lambda})$ restricts to an $H$-action on $\mathcal{U}_{x}$ for each $x\in \mathbb{P}_{\mathbf k}^1$. That is, there exists a group homomorphism $\varphi_x: H\to \Aut(\mathcal{U}_{x})$ for any $x$.
For each exceptional point $\lambda_j\in \mathbb{P}_{\mathbf k}^1$, since ${\rm g.c.d.}(a_{1j}, a_{2j}, \cdots, a_{sj}, p_j)=d_j$, we know that $d_j$ is the minimal positive integer such that $\text{Im}(\varphi_{\lambda_j})=\langle \tau^{d_j}\rangle$. Then by Proposition \ref{group action on cyclic quiver} we have an equivalence $$(\mathcal{U}_{\lambda_j})^H\cong \underbrace{\mathcal{T}_{d_j}\times\cdots\times\mathcal{T}_{d_j}}_{n_j \ \text{times}}.$$
Similarly, for any $x\notin\{\lambda_1, \lambda_2, \cdots, \lambda_t\}$, $(\mathcal{U}_x)^H$ is equivalent to a product of stable tubes of rank $1$. Hence $${\bf q}=(\underbrace{d_1,\cdots, d_1}_{n_1 \ \text{times}}, \underbrace{d_2,\cdots, d_2}_{n_2 \ \text{times}},\cdots, \underbrace{d_t,\cdots, d_t}_{n_t \ \text{times}}).$$

According to \cite[Theorem 2]{Lenzing2017}, we obtain $$\chi_{\mathbb{P}_{\mathbf k}^1(\mathbf{p}; {\boldsymbol\lambda})}=\frac{\chi_{\mathbf{Y}}}{|H^{\ast}|}.$$
Recall that $\chi_{\mathbb{P}_{\mathbf k}^1(\mathbf{p}; {\boldsymbol\lambda})}=2-\sum_{j=1}^t(1-\frac{1}{p_j})$. Moreover, by \cite[(2.3)]{Lenzing2017}, we have
$$\chi_{\mathbf{Y}}=2(1-g_{\bbY})-\sum_{j=1}^t(1-\frac{1}{d_j})n_j=2(1-g_{\bbY})-n\sum_{j=1}^t\frac{d_j-1}{p_j}.$$
It follows that
$$g_{\bbY}=\frac{n}{2}\sum_{j=1}^t(1- \frac{d_j}{p_j})-n+1.$$
This finishes the proof of Proposition \ref{change of weight}.

\section{Applications}

The equivalence \eqref{main equivalence} provides a new approach to investigate the category of coherent sheaves of weighted projective curves via that of weighted projective lines. In this section, we give some applications of \eqref{main equivalence} to hyperelliptic curves of arbitrary genus and smooth projective curves of genus two. Moreover, we also establish links to Arnold's exceptional unimodal singularities.

\subsection{Weighted projective lines and smooth projective curves}
The following result is an immediate consequence of Proposition \ref{change of weight}, which gives a necessary and sufficient condition for $\mathbf{Y}$ being a smooth projective curve.

{\begin{cor}\label{smooth1}
Keep the notations and assumptions in Proposition \ref{change of weight}. Then the following are equivalent:
\begin{itemize}
\item[$(1)$]  $\mathbf{Y}$ is a smooth projective curve;

\item[$(2)$] ${\rm g.c.d.}(a_{1j}, a_{2j}, \cdots, a_{sj}, p_j)=1$ for each $1\leq j\leq t$.
\end{itemize}
\end{cor}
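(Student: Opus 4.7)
The plan is to deduce this corollary directly from Proposition \ref{change of weight}. A weighted projective curve $\mathbf{Y}=\mathbb{Y}(\mathbf{q};\boldsymbol\mu)$ is (genuinely) a smooth projective curve precisely when it carries no weighted points, i.e.\ when every entry of its weight sequence $\mathbf{q}$ equals $1$, so the task reduces to translating this condition in terms of the integers $d_j$.

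First I would invoke Proposition \ref{change of weight} to write
$$\mathbf{q}=(\underbrace{d_1,\ldots,d_1}_{n_1\ \text{times}},\ \underbrace{d_2,\ldots,d_2}_{n_2\ \text{times}},\ \ldots,\ \underbrace{d_t,\ldots,d_t}_{n_t\ \text{times}}).$$
From this display it is immediate that all entries of $\mathbf{q}$ equal $1$ whenever $d_j=1$ for every $j$ with $n_j\geq 1$, giving the implication $(2)\Rightarrow(1)$ at once.

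For the converse, I would verify that $n_j\geq 1$ holds for every $1\leq j\leq t$, so that each $d_j$ genuinely contributes to $\mathbf{q}$ and cannot be silently discarded. Since $d_j\mid p_j$, the quotient $m_j:=p_j/d_j$ is a positive integer and $n_j=n/m_j$. Geometrically, $m_j$ is the size of the $H$-orbit of the tube $\mathcal{U}_{\lambda_j}$ appearing in the proof of Proposition \ref{change of weight} (see also Proposition \ref{group action on cyclic quiver}), so by the orbit-stabilizer theorem $m_j$ divides $|H|=n$, forcing $n_j$ to be a positive integer. Hence if $\mathbf{Y}$ is a smooth projective curve then every $d_j$ that appears must equal $1$, and all of them appear.

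The only potential obstacle is precisely the one just addressed: a priori one might worry that some $d_j>1$ is compatible with $\mathbf{Y}$ being smooth merely because its multiplicity $n_j$ vanishes. Once this is ruled out by the orbit-stabilizer observation, the equivalence of (1) and (2) follows with essentially no further computation.
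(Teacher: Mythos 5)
Your proof is correct and follows the same route as the paper, which simply reads off from Proposition \ref{change of weight} that $\mathbf{Y}$ is smooth if and only if every $d_j=1$, i.e.\ every ${\rm g.c.d.}(a_{1j},\dots,a_{sj},p_j)=1$. Your additional check that each $n_j=n d_j/p_j$ is a positive integer (via the orbit--stabilizer argument, so no $d_j>1$ can be hidden by a vanishing multiplicity) is a valid point that the paper leaves implicit, but it does not change the argument in substance.
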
}

\begin{proof}
By Proposition \ref{change of weight}, $\mathbf{Y}$ is a smooth projective curve if and only if $d_j=1$ for $1\leq j\leq t$, if and only if ${\rm g.c.d.}(a_{1j}, a_{2j}, \cdots, a_{sj}, p_j)=1$ for each $1\leq j\leq t$.
\end{proof}

Let $q$ be a prime number. For $a,r\in\mathbb{N}$, we introduce the notation $q^a\parallel r$ by means of $q^a|r$ but $q^{a+1}\nmid r$. For ${\bf{p}}=(p_1, p_2, \cdots, p_t)$, let $q^{b_i}\parallel p_i$ for each $1\leq i\leq t$ and define
$$\text{index}_q({\bf{p}}):=\big|\big\{i|b_i=\text{max}\{b_1,\cdots,b_t\}\big\}\big|.$$
Obviously, if $q$ is not a factor of $p_i$ for any $1\leq i\leq t$, then each $b_i=0$, hence $\text{index}_q({\bf{p}})=t$.

{\begin{prop}\label{smooth2}
Keep the notations and assumptions in Proposition \ref{change of weight}. Then the following are equivalent:

\begin{itemize}
\item[$(1)$] there exists $H\leq t\bbL(\bf{p})$ such that $\mathbf{Y}$ is a smooth projective curve;

\item[$(2)$] for each prime number $q$, $\textup{index}_q(\mathbf{p})\geq 2$.
    \end{itemize}
\end{prop}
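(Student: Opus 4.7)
The plan is to read off smoothness of $\mathbf{Y}$ from Corollary~\ref{smooth1} and translate the coprimality condition $\gcd(a_{1j},\ldots,a_{sj},p_j)=1$ into a statement about $q$-adic valuations of the coefficients of elements of $t\mathbb{L}(\mathbf{p})$. The key input is Lemma~\ref{torsiongp}(2): an element $\vec{y}=\sum_j a_j\vec{x}_j+a\vec{c}$ lies in $t\mathbb{L}(\mathbf{p})$ if and only if $\delta(\vec{y})=0$, equivalently $\sum_{j=1}^{t} a_j/p_j\in\mathbb{Z}$. All of the work then takes place locally at each prime $q$.

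For $(1)\Rightarrow(2)$ I argue by contraposition. Suppose some prime $q$ has $\text{index}_q(\mathbf{p})=1$, and let $i^{\ast}$ be the unique index at which $B=\max_i b_i\geq 1$ is attained. For any $H\leq t\mathbb{L}(\mathbf{p})$ with generators $\vec{y}_k=\sum_j a_{kj}\vec{x}_j+a_k\vec{c}$, the integrality condition above yields a $q$-adic constraint: if $q\nmid a_{ki^{\ast}}$ for some $k$, then $v_q(a_{ki^{\ast}}/p_{i^{\ast}})=-B$, whereas $v_q(a_{kj}/p_j)\geq -b_j>-B$ for every $j\neq i^{\ast}$, and the non-archimedean property of $v_q$ forces $v_q\bigl(\sum_j a_{kj}/p_j\bigr)=-B<0$, contradicting $\sum_j a_{kj}/p_j\in\mathbb{Z}$. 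Hence $q\mid a_{ki^{\ast}}$ for every $k$, so $q\mid d_{i^{\ast}}$, and $\mathbf{Y}$ fails to be smooth by Corollary~\ref{smooth1}.

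For $(2)\Rightarrow(1)$ I take $H=t\mathbb{L}(\mathbf{p})$ with the explicit generating set $\{\vec{y}_{ij}=(p_i/d_{ij})\vec{x}_i-(p_j/d_{ij})\vec{x}_j\mid 1\leq i<j\leq t\}$ supplied by Lemma~\ref{torsiongp}(4). Only the generators involving $\vec{x}_j$ contribute to the $j$-th coordinate, so
\[
d_j=\gcd_{i\neq j}\bigl(p_j/d_{ij}\bigr).
\]
Passing to $q$-adic valuations, $v_q(p_j/d_{ij})=\max(b_j-b_i,0)$, whence $v_q(d_j)=\min_{i\neq j}\max(b_j-b_i,0)$. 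The hypothesis $\text{index}_q(\mathbf{p})\geq 2$ ensures that for every $j$ there exists some $i\neq j$ with $b_i\geq b_j$ (either because $b_j<\max_i b_i$, or because $b_j=\max_i b_i$ is attained by at least two indices), giving $v_q(d_j)=0$. Running this over every prime $q$ yields $d_j=1$ for all $j$, and Corollary~\ref{smooth1} concludes that $\mathbf{Y}$ is smooth.

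The substantive step is the $q$-adic valuation argument in $(1)\Rightarrow(2)$: one must extract the correct divisibility from membership in $\ker\delta$ and verify that a unique-minimum of valuations precludes integrality. The remainder is bookkeeping with Lemma~\ref{torsiongp} and Corollary~\ref{smooth1}.
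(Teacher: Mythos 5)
Your proof is correct and follows essentially the same route as the paper: both directions reduce to Corollary \ref{smooth1} via Lemma \ref{torsiongp}, with $(1)\Rightarrow(2)$ by contraposition forcing $q\mid a_{ki^{\ast}}$ at the unique index of maximal $q$-valuation, and $(2)\Rightarrow(1)$ by taking $H=t\mathbb{L}(\mathbf{p})$. Your $q$-adic phrasing (using $\ker\delta$ and the ultrametric inequality instead of the paper's explicit divisibility manipulation of $p\vec{y}=0$) is only a cosmetic repackaging of the same argument.
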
}

{\begin{proof}
First, we assume the statement (1) is satisfied. Assume there exists a prime number $q$ such that $\textup{index}_q(\mathbf{p})=1$, i.e. there exists $k$ such that $b_k>b_i$ for each $i\neq k$. Denote $p=\text{l.c.m.}(p_1, p_2, \cdots, p_t)$. Hence
\begin{align}\label{q||p} q\nmid \frac{p}{p_k}\,\ \text{and}\,\ q\mid \frac{p}{p_i}\,\ \text{for each}\,\ i\neq k.
\end{align}

For any $0\neq \vec{y}=\sum_{i=1}^t a_i\vec{x}_i+a_0\vec{c}\in t\mathbb{L}(\bf{p})$ with $0\leq a_i< p_i$, then $p\vec{y}=\sum_{i=1}^t pa_i\vec{x}_i+pa_0\vec{c}=0$. Hence, $\sum_{i=1}^t \frac{a_ip^2}{p_i}+a_0p^2=0$. Let $q^b||p$, thus by \eqref{q||p}, $q^{b+1}\mid \frac{a_ip^2}{p_i}$ for each $i\neq k$ and $q^{b+1}\mid a_0p^2$. It follows that $q^{b+1}\mid \frac{a_kp^2}{p_k}$. Since $q^b||p$ and $q\nmid \frac{p}{p_k}$, we have $q|a_k$.

Therefore, for any subgroup $H$ of $t\mathbb{L}(\bf{p})$ generated by
$\vec{y}_i=\sum_{j=1}^t a_{ij}\vec{x}_j+a_{i}\vec{c}$ for $1\leq i\leq s$, we have $q\mid{\rm g.c.d.}(a_{1k}, a_{2k}, \cdots, a_{sk}, p_k)$. This contradicts Corollary \ref{smooth1}. Thus for each prime number $q$, $\textup{index}_q(\mathbf{p})\geq 2$.

Conversely, we assume (2) is satisfied. Fix $1\leq j\leq t$, write $p_j=\prod_{i=1}^m q_i^{d_i}$, where $q_i$'s are pairwise distinct primes. For any $1\leq i\leq m$, since $\text{index}_{q_i}(\mathbf{p})\geq 2$, there exists some $k\neq j$ such that $q_i^{d_i}\mid p_k$. Set $\vec{y}_i=\frac{p_j}{q^{d_i}_i}\vec{x}_j-\frac{p_k}{q^{d_i}_i}\vec{x}_k$ and $a_{ij}=\frac{p_j}{q^{d_i}_i}$. Then $\vec{y}_i\in t\mathbb{L}(\bf{p})$ and $q_i\nmid a_{ij}$. It follows that ${\rm g.c.d.}(a_{1j}, a_{2j}, \cdots, a_{mj}, p_j)=1$. By Corollary \ref{smooth1}, there exists $H=t\mathbb{L}(\bf{p})$ such that $\mathbf{Y}$ is a smooth projective curve. This finishes the proof.
\end{proof}

\begin{exm}\label{example for 2612} Let $\mathbf{p}=(2,6,12)$. Then $2\parallel 2,\ 2\parallel 6,\ 2^{2}\parallel 12$, hence $\textup{index}_2(\mathbf{p})=1$. According to Proposition \ref{smooth2}, there does not exist a subgroup $H\leq\bbL(\bf{p})$, such that
$$({\rm coh}\mbox{-}\mathbb{P}_{\mathbf k}^1(\mathbf{p}; {\boldsymbol\lambda}))^{H}\stackrel{\sim}\longrightarrow {\rm coh}\mbox{-}\bbY
$$ for some smooth projective curve $\bbY$.

\end{exm}

\begin{exm}\label{example for 4612} Let $\mathbf{p}=(4,6,12)$. Then $4=2^{2},\ 6=2\cdot3,\ 12=2^{2}\cdot3$, hence $\textup{index}_2(\mathbf{p})=2$ and $\textup{index}_3(\mathbf{p})=2$. By Proposition \ref{smooth2}, there exists $H\leq t\bbL(\bf{p})$ such that $$({\rm coh}\mbox{-}\mathbb{P}_{\mathbf k}^1(\mathbf{p}; {\boldsymbol\lambda}))^{H}\stackrel{\sim}\longrightarrow {\rm coh}\mbox{-}\bbY
$$ for some smooth projective curve $\bbY$. { In fact, we can take $H=t\mathbb{L}(\bf{p})$, then $\bbY$ is a smooth projective curve of genus 7 by Proposition \ref{change of weight}.}
\end{exm}}

\subsection{Relation to hyperelliptic curves of arbitrary genus}
For any $g>0$, let $a_1, a_2,\cdots, a_g$ be pairwise distinct elements in ${\bf{k}}\setminus\{0,1\}$.
Let ${\mathbf{E}}_g$ be the hyperelliptic curve of genus $g$ determined by the following equation: $$z^2=(y^2-x^2)(y^2-a_1x^2)\cdots(y^2-a_gx^2).$$

\begin{prop}\label{hyperelliptic}
Keep the notations as above. Let ${\bf{p}}=(\underbrace{2,\cdots, 2}_{g+3 \ \text{times}})$ and $\boldsymbol\lambda=\{\infty,0,1,a_1,\cdots,a_g\}$. Then there exists an equivalence
\begin{align}\label{hyper equiv}({\rm coh}\mbox{-}\mathbb{P}_{\mathbf k}^1(\mathbf{p}; {\boldsymbol\lambda}))^{H}\stackrel{\sim}\longrightarrow {\rm coh}\mbox{-}\mathbf{E}_g,\end{align}
where ${H=\langle \vec{x}_{1}-\vec{x}_{2},\,\delta_{g,even} \vec{x}_{2}+\sum_{i=3}^{g+3}(-1)^i \vec{x}_i \rangle.}$
\end{prop}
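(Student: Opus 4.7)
I would apply Theorem \ref{trichotomy} and Proposition \ref{change of weight} to reduce to a smooth projective curve $\bbY$ of genus exactly $g$, and then identify $\bbY$ with $\mathbf{E}_g$ via the factorization $H_1\le H$ with $H_1:=\langle\vec{x}_1-\vec{x}_2\rangle$, using that a smooth double cover of $\mathbb{P}_{\mathbf k}^1$ is determined up to isomorphism by its branch locus. The main obstacle will be the last step, which requires tracking the branch locus produced by the intermediate equivariantization.

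\textbf{Reduction to a smooth genus-$g$ curve.} First I would verify that $H\le t\bbL({\bf p})$ with $|H|=4$. The element $\vec{x}_1-\vec{x}_2$ is clearly $2$-torsion. For $\vec{y}:=\delta_{g,\mathrm{even}}\vec{x}_2+\sum_{i=3}^{g+3}(-1)^i\vec{x}_i$, a direct calculation gives $2\vec{y}=\bigl(\delta_{g,\mathrm{even}}+\sum_{i=3}^{g+3}(-1)^i\bigr)\vec{c}$; the inner signed sum equals $0$ when $g$ is odd and $-1$ when $g$ is even, so the Kronecker delta is precisely the correction needed to have $2\vec{y}=0$ in both parities. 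Independence of the two generators modulo $\bbZ\vec{c}\cong(\bbZ/2)^{g+3}$ is immediate from their supports. In the notation of Proposition \ref{change of weight} with $n=4$, $s=2$, the first generator contributes $\pm 1$ to columns $1,2$ of the coefficient matrix, while the second contributes $\pm 1$ to each column $j\ge 3$; hence $d_j=\gcd(a_{1j},a_{2j},2)=1$ for every $j$. By Corollary \ref{smooth1}, $\bbY$ is smooth, and the genus formula yields
\[
g_\bbY=\frac{4}{2}(g+3)\cdot\frac{1}{2}-4+1=g.
\]

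\textbf{Identification with $\mathbf{E}_g$ and cross-check.} Next I would apply Proposition \ref{change of weight} to $H_1$ alone (with $n=2$, $d_1=d_2=1$, $d_j=2$ for $j\ge 3$) to obtain an intermediate equivalence $({\rm coh}\mbox{-}\mathbb{P}_{\mathbf k}^1({\bf p};{\boldsymbol\lambda}))^{H_1}\simeq{\rm coh}\mbox{-}\mathbb{P}_{\mathbf k}^1(\underbrace{2,\dots,2}_{2g+2};{\boldsymbol\mu})$, where ${\boldsymbol\mu}=\{\pm 1,\pm\sqrt{a_1},\dots,\pm\sqrt{a_g}\}$ consists of the fibers $\pm\sqrt{\lambda_j}$ of the double cover of $\mathbb{P}_{\mathbf k}^1$ ramified at $\lambda_1=\infty,\lambda_2=0$. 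The further equivariantization by $H/H_1\cong\mathbb{Z}/2$ then presents $\bbY$ as a smooth unramified degree-$2$ cover of this orbifold, equivalently a degree-$2$ map $\bbY\to\mathbb{P}_{\mathbf k}^1$ branched exactly at the $2g+2$ points of ${\boldsymbol\mu}$; uniqueness of such covers forces $\bbY\cong\mathbf{E}_g$. The delicate point is verifying that the $H/H_1$-action really unramifies the orbifold structure at each weighted point (rather than merely permuting tubes non-trivially), which can be done by tracking the residual class $\vec{y}+H_1$ on simple sheaves via Proposition \ref{group action on cyclic quiver}. A concrete cross-check is the element $w:=x_3x_4\cdots x_{g+3}\in S({\bf p};{\boldsymbol\lambda})$: using $x_i^2=x_2^2-\lambda_ix_1^2$ one obtains
\[
w^2=\prod_{i=3}^{g+3}(x_2^2-\lambda_ix_1^2)=(x_2^2-x_1^2)(x_2^2-a_1x_1^2)\cdots(x_2^2-a_gx_1^2),
\]
the defining equation of $\mathbf{E}_g$ under $(x,y,z)\leftrightarrow(x_1,x_2,w)$; a parity-separated computation using $\vec{y}\equiv 0\pmod{H}$ shows $\deg w\equiv(g+1)\bar{\vec{x}}_1\in\bbL({\bf p})/H$, matching the weight of $z$ in $\mathbf{E}_g\subset\mathbb{P}_{\mathbf k}(1,1,g+1)$ and confirming that the homogeneous coordinate algebra of $\mathbf{E}_g$ sits inside $\bar S$ with the correct grading.
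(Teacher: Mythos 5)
Your first half is correct and clean: the torsion check on the second generator (with the Kronecker delta absorbing the parity of $\sum_{i=3}^{g+3}(-1)^i$), the computation $d_j=1$ for all $j$, and the genus count $g_{\bbY}=g$ via Proposition \ref{change of weight} and Corollary \ref{smooth1} are all valid, and they establish that $({\rm coh}\mbox{-}\mathbb{P}_{\mathbf k}^1(\mathbf{p};{\boldsymbol\lambda}))^H\simeq{\rm coh}\mbox{-}\bbY$ for \emph{some} smooth projective curve of genus $g$. This is a genuinely different route from the paper, which never passes through the trichotomy machinery at all: it works directly with the $\bbL(\mathbf{p})$-graded coordinate algebra $S$, forms the restriction subalgebra $S_{H'}$ for $H'=\langle\vec{x}_1,\vec{x}_2,\sum_{i=3}^{g+3}\vec{x}_i\rangle=\mathbb{Z}\vec{x}_1\oplus H$, identifies $S_{H'}$ with $\mathbf{k}[u,v,w]/\bigl(w^2-\prod_i(v^2-a_iu^2)\bigr)$, and then chains the equivalences of \cite[Prop.\ 5.2, Cor.\ 4.4, Prop.\ 6.6]{CCZ} to land on ${\rm qmod}^{\mathbb{Z}}\overline{S_{H'}}\simeq{\rm coh}\mbox{-}\mathbf{E}_g$.

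The genuine gap is in your identification step. Proposition \ref{change of weight} outputs only the weight data $\mathbf{q}$ and the genus; it says nothing about the parameter sequence ${\boldsymbol\mu}$. So your claim that the intermediate quotient by $H_1=\langle\vec{x}_1-\vec{x}_2\rangle$ is $\mathbb{P}_{\mathbf k}^1(2,\dots,2;{\boldsymbol\mu})$ with ${\boldsymbol\mu}=\{\pm1,\pm\sqrt{a_1},\dots,\pm\sqrt{a_g}\}$ --- i.e.\ that the underlying map of curves is the double cover $u\mapsto u^2$ ramified at $0,\infty$ and that the weighted points sit over the $\lambda_j$ --- is precisely the content that needs proof, and nothing you have quoted supplies it. (The paper itself must resort to explicit coordinate-algebra computations or to \cite[Table 3]{CLRZ} whenever parameters, as opposed to weights, are at stake.) Without pinning down the branch locus, the uniqueness-of-double-covers argument only yields that $\bbY$ is \emph{some} hyperelliptic curve of genus $g$, not the specific $\mathbf{E}_g$ defined by $z^2=\prod_i(y^2-a_ix^2)$. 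Ironically, your ``cross-check'' with $w=x_3\cdots x_{g+3}$ and $w^2=\prod_{i}(x_2^2-\lambda_ix_1^2)$ is exactly the germ of the paper's actual proof: promoting that observation to an equivalence of quotient graded-module categories (via the restriction subalgebra $S_{H'}$ and the monadicity results of \cite{CCZ}) is what closes the gap, so it should be the main argument rather than a sanity check.
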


{\begin{proof}
Denote by $a_0=1$. Let $S=\mathbf k[X_1,X_2,\cdots,X_{g+3}]/I$, where $I=(X_i^2-X_2^2+a_{i-3}X_1^2|i=3,4,\cdots,g+3)$. Denote by $x_j=X_j+I$ for $1\leq j\leq g+3$. Then $S$ is an $\bbL({\bf{p}})$-graded algebra with $\textup{deg}(x_j)=\vec{x}_j$ for any $j$. Let $H'=\langle\vec{x}_1,\vec{x}_2,\sum_{i=3}^t\vec{x}_i\rangle$.
By easy calculation we obtain that the restriction subalgebra $S_{H'}$ is isomorphic to $$\mathbf k[u,v,w]/\big(w^2-(v^2-u^2)(v^2-a_{1}u^2)\cdots(v^2-a_{g}u^2)\big),$$ which can be viewed as an $H'$-graded algebra by defining $\deg(u)=\vec{x}_1,\deg(v)=\vec{x}_2,\deg(w)=\sum_{i=3}^t\vec{x}_i$. Since $H'$ is effective in the sense of \cite[Definition 6.5]{CCZ},
by \cite[Proposition 6.6]{CCZ} we have \begin{align}\label{L and H'}\text{qmod}^{\bbL(\bf{p})}S\stackrel{\sim}\longrightarrow\text{qmod}^{H'}S_{H'}.\end{align}

Observe that $H'=\mathbb{Z}\vec{x}_1\oplus H$, hence $H'/H\cong \mathbb{Z}$. We define a $\mathbb{Z}$-graded algebra $\overline{S_{H'}}$ as follows: as an ungraded algebra $\overline{S_{H'}}=S_{H'}$, while its homogeneous component is given by $(\overline{S_{H'}})_n=\oplus_{\vec{h}\in H}(S_{H'})_{n\vec{x}_1+\vec{h}}$ for each $n\in\mathbb{Z}$. By \cite[Proposition 5.2]{CCZ} we have an equivalence of categories
$$(\mod^{H'}S_{H'})^H\stackrel{\sim}\longrightarrow\mod^{\mathbb{Z}}\overline{S_{H'}},$$
which restricts to an equivalence $(\mod_0^{H'}S_{H'})^H\stackrel{\sim}\longrightarrow\mod_0^{\mathbb{Z}}\overline{S_{H'}}$ of full subcategories consisting of finite dimensional modules. Applying \cite[Corollary 4.4]{CCZ} we have an equivalence of categories\begin{align}\label{H and H'}(\text{qmod}^{H'}S_{H'})^H\stackrel{\sim}\longrightarrow\text{qmod}^{\mathbb{Z}}\overline{S_{H'}}.
\end{align} Combining with \eqref{L and H'} and \eqref{H and H'} we have $$(\text{qmod}^{\bbL(\bf{p})}S)^H\stackrel{\sim}\longrightarrow\text{qmod}^{\mathbb{Z}}\overline{S_{H'}}.$$
This proves \eqref{hyper equiv}.
\end{proof}}

\subsection{Relation to smooth projective curves of genus 2}

Recall that ${\bf p}=(p_1, p_2, \cdots, p_t)$ and $\bbL({\bf p})=\bbZ\vec{x}_1\oplus\cdots\oplus\bbZ\vec{x}_t/(p_1\vec{x}_1=p_2\vec{x}_2=\cdots = p_t\vec{x}_t:=\vec{c})$. Let $\sigma$ be a permutation on $\{1,2,\cdots,t\}$ and ${\bf p}^{\sigma}:=({p}_{\sigma(1)},{p}_{\sigma(2)},\cdots,{p}_{\sigma(t)})$. Denote by $\bbL({\bf p}^{\sigma}):=\bbZ\vec{y}_1\oplus\cdots\oplus\bbZ\vec{y}_t/({p}_{\sigma(1)}\vec{y}_1={p}_{\sigma(2)}\vec{y}_2=\cdots = {p}_{\sigma(t)}\vec{y}_t).$ Then there is a natural \emph{permutation isomorphism}
$$\pi_{\sigma}:\; \bbL({\bf p})\rightarrow\bbL({\bf p}^{\sigma});\; \vec{x}_i\mapsto\vec{y}_{\sigma^{-1}(i)}\;\ \textup{for}\;\ 1\leq i\leq t.$$

{\begin{prop}
Let $\mathbb{P}_{\mathbf k}^1(\mathbf{p};{\boldsymbol\lambda})$ be a weighted projective line and $H$ be a finite subgroup of $\mathbb{L}(\bf{p})$. Assume
$$({\rm coh}\mbox{-}\mathbb{P}_{\mathbf k}^1(\mathbf{p}; {\boldsymbol\lambda}))^{H}\stackrel{\sim}\longrightarrow {\rm coh}\mbox{-}\mathbb{Y}$$ for some smooth projective curve $\mathbb{Y}$ of genus 2. Then all the possibilities for $(\mathbf{p}, H)$ are classified in Table \ref{smooth of genus 2} (up to permutation isomorphisms).
\begin{table}[h]
\caption{Equivariant relations for genus $2$}
\begin{tabular}{|c|c|c|c|c|}
  \hline
 $\bf{p}$&\makecell*[c]{$H$}\\
\hline
 \makecell*[c]{$(2,5,10)$}&$t\bbL(2,5,10)$\\
  \hline
 \makecell*[c]{$(2,6,6)$}&$t\bbL(2,6,6)$\\
  \hline
  \makecell*[c]{$(2,8,8)$}&$\langle\vec{x}_1+\vec{x}_2+3\vec{x}_3-\vec{c}\rangle$\\
  \hline
  \makecell*[c]{$(3,6,6)$}&$\langle\vec{x}_1+5\vec{x}_2+5\vec{x}_3-2\vec{c}\rangle$\\
  \hline
  \makecell*[c]{$(5,5,5)$}&$\langle\vec{x}_1+\vec{x}_2+3\vec{x}_3-\vec{c}\rangle$\\
  \hline
  \makecell*[c]{$(2,2,3,3)$}&$t\bbL(2,2,3,3)$\\
  \hline
  \makecell*[c]{$(2,2,4,4)$}&$\langle\vec{x}_1+\vec{x}_2+\vec{x}_3+3\vec{x}_4-2\vec{c}\rangle$\\
  \hline
  \makecell*[c]{$(3,3,3,3)$}&$\langle\vec{x}_1+\vec{x}_2+2\vec{x}_3+2\vec{x}_4-2\vec{c}\rangle$\\
  \hline
  \makecell*[c]{$(2,2,2,2,2)$}&$\langle\sum_{i=1}^4 \vec{x}_i-2\vec{c}, \vec{x}_4+\vec{x}_5- \vec{c}\rangle$\\
  \hline
  \makecell*[c]{$(2,2,2,2,2,2)$}&$\langle\sum_{i=1}^6 \vec{x}_i-3\vec{c}\rangle$\\
  \hline
\end{tabular}
\label{smooth of genus 2}
\end{table}\end{prop}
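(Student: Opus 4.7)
The plan is to combine the numerical criteria developed in this paper to reduce the classification to a finite enumeration, then sift the candidates using the gcd test of Corollary \ref{smooth1}. First I would apply Corollary \ref{smooth1}: requiring $\mathbb{Y}$ to be smooth forces $d_j = \gcd(a_{1j},\ldots,a_{sj},p_j) = 1$ for every $j$, so the weight vector $\mathbf{q}$ in Proposition \ref{change of weight} is trivial. Substituting $d_j = 1$ together with $g_{\mathbb{Y}} = 2$ into the genus formula of Proposition \ref{change of weight} yields the central Diophantine equation
\begin{equation}\label{genus2eqn}
\sum_{j=1}^{t}\Bigl(1 - \tfrac{1}{p_j}\Bigr) = 2 + \tfrac{2}{n}, \qquad n = |H|,
\end{equation}
together with the divisibility constraints $p_j \mid n$ (from the integrality of $n_j = n/p_j$) and $n \mid |t\mathbb{L}(\mathbf{p})| = (p_1 \cdots p_t)/\mathrm{lcm}(p_1,\ldots,p_t)$. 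Proposition \ref{smooth2} supplies the further necessary condition $\mathrm{index}_q(\mathbf{p}) \geq 2$ for every prime $q$.

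Next I would enumerate all $\mathbf{p}$ satisfying \eqref{genus2eqn} under these restrictions. Since each summand lies in $[\tfrac{1}{2}, 1)$ and the right-hand side lies in $(2, 4]$, one has $3 \leq t \leq 6$. For $t = 6$ only $\mathbf{p} = (2,2,2,2,2,2)$ with $n=2$ passes, and for $t = 5$ only $\mathbf{p} = (2,2,2,2,2)$ with $n=4$. For $t = 4$, a short case analysis over admissible $n \in \{3, 4, 6\}$ produces $(3,3,3,3), (2,2,4,4), (2,2,3,3)$, while competitors such as $(2,2,2,3)$ and $(2,2,2,6)$ are eliminated by the index condition. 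For $t = 3$, rewriting \eqref{genus2eqn} as $\tfrac{1}{p_1}+\tfrac{1}{p_2}+\tfrac{1}{p_3} = 1 - \tfrac{2}{n}$ with $p_1 \leq p_2 \leq p_3$ and scanning admissible $n$ yields exactly $(2,5,10), (2,6,6), (2,8,8), (3,6,6), (5,5,5)$; triples like $(2,3,p)$ or $(3,3,q)$ with $q \neq 3$ fail the index test, while $(4,4,4)$ passes \eqref{genus2eqn} with $n=8$ but will be ruled out below.

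For each surviving $\mathbf{p}$ I would then classify the order-$n$ subgroups $H \leq t\mathbb{L}(\mathbf{p})$ satisfying the gcd condition of Corollary \ref{smooth1}, modulo the permutation isomorphism $\pi_\sigma$. Using Lemma \ref{torsiongp}(4) to describe $t\mathbb{L}(\mathbf{p})$ explicitly, several cases are immediate: when $n = |t\mathbb{L}(\mathbf{p})|$ (as for $(2,5,10), (2,6,6), (2,2,3,3)$), the subgroup $H = t\mathbb{L}(\mathbf{p})$ is forced and one need only verify gcd$=1$ on the standard generators. For the other cases I would decompose $t\mathbb{L}(\mathbf{p})$ into its invariant factors, list the order-$n$ subgroups via characters, and write a generator of each in the normal form \eqref{equ:nor} to apply the gcd test coefficient-wise; eliminating those that fail leaves precisely the representatives displayed in Table \ref{smooth of genus 2}, with the permutation of weights collapsing equivalent descriptions into a single entry.

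The principal obstacle will be distinguishing weight data that pass \eqref{genus2eqn} and the index condition but still admit no valid $H$. The instructive case is $\mathbf{p} = (4,4,4)$ with $n = 8$: here $t\mathbb{L}(4,4,4) \cong (\mathbb{Z}/4)^2$ has exactly three index-$2$ subgroups, each of the form $\ker \chi$ for a surjection $\chi \colon (\mathbb{Z}/4)^2 \twoheadrightarrow \mathbb{Z}/2$, and a short computation shows that every element in such a kernel has some coefficient $l_j$ even, so the condition of Corollary \ref{smooth1} inevitably fails at that $j$. Consequently the enumeration cannot be shortened to a pure character count, and each order-$n$ subgroup must be inspected individually. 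Once this bookkeeping is carried out, including identifying generators that differ only by a unit multiple or by a weight-preserving permutation, the entries in Table \ref{smooth of genus 2} emerge uniquely.
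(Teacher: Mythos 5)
Your proposal is correct and follows essentially the same route as the paper: the same Euler--characteristic equation $\sum_{j}(1-\tfrac{1}{p_j})=2+\tfrac{2}{n}$, the resulting bound $3\le t\le 6$, the index condition of Proposition \ref{smooth2}, and the coefficient-gcd test of Corollary \ref{smooth1} to pin down $H$. You are in fact slightly more careful than the printed proof at one point: for $t=3$ the weight type $(4,4,4)$ with $n=8$ survives every numerical test the paper invokes, and your explicit check that each of the three index-$2$ subgroups of $t\bbL(4,4,4)\cong(\mathbb{Z}/4)^2$ is the kernel of one of the three coefficient-parity characters, hence fails the gcd condition at the corresponding coordinate, supplies exactly the step needed to exclude it.
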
}

{\begin{proof}
Let $\mathbf{p}=(p_1,\cdots,p_t)$ and denote by $|H|=n$. Up to permutation isomorphisms we can assume $2\leq p_1\leq p_2\leq\cdots\leq p_t$ without loss of generality. Since $\mathbb{Y}$ is smooth, by Corollary \ref{smooth1} and Proposition \ref{change of weight} we have $\frac{n}{p_t}=n_t\geq 1$, i.e. $p_t\leq n$.

According to \cite[Theorem 2]{Lenzing2017}, we have
$$2-\sum_{i=1}^t (1-\frac{1}{p_i})=\chi_{\mathbb{P}_{\mathbf k}^1(\mathbf{p};{\boldsymbol\lambda})}=\frac{\chi_{\mathbb{Y}}}{n}=\frac{2(1-g_\mathbb{Y})}{n}=-\frac{2}{n}.$$
It follows that
\begin{equation}\label{genus 2 euler char} t-2=\sum_{i=1}^t \frac{1}{p_i}+\frac{2}{n}\leq \frac{t+2}{p_1}.\end{equation}
Then $p_1\geq 2$ implies $t\leq 6$. Moreover,
by Theorem \ref{trichotomy} we know that $\mathbb{P}_{\mathbf k}^1(\mathbf{p};{\boldsymbol\lambda})$ is of wild type, hence $t\geq 3$.

For $t=6$, by \eqref{genus 2 euler char} we have $4=\sum_{i=1}^6 \frac{1}{p_i}+\frac{2}{n}\leq \frac{8}{p_1}\leq 4$, which forces $\mathbf{p}=(2,2,2,2,2,2)$ and $n=2$. Assume $H=\langle \vec{y}\rangle$ with normal form $\vec{y}=\sum\limits_{i=1}^6 a_i\vec{x}_i+a\vec{c}$.
According to Corollary \ref{smooth1}, we have $\text{g.c.d.}(a_i, p_i)=1$ for $1\leq i\leq 6$, hence $a_i=1$ for any $i$. Now $n=2$ implies $a=-3$, that is,
$H=\langle\sum_{i=1}^6 \vec{x}_i-3\vec{c}\rangle$.

For $t=5$, \eqref{genus 2 euler char} reads $3=\sum_{i=1}^5 \frac{1}{p_i}+\frac{2}{n}\leq \frac{7}{p_1}.$ By similar arguments as above, we get $p_i=2$ for $1\leq i\leq 4$ and $p_5\leq 3$. If $p_5=3$, we have $\textup{index}_3(\mathbf{p})=1$, contradicting to Proposition \ref{smooth2}, hence $p_5=2$ and then $n=4$ follows. Note that each torsion element in $t\bbL(2,2,2,2,2)$ has order 2. Hence $H$ has the form $H=\langle \vec{y}_1, \vec{y}_2\rangle$ with normal forms $\vec{y}_k=\sum_{i=1}^5 a_{ki}\vec{x}_i+a_k\vec{c}$ for $k=1,2$. According to Corollary \ref{smooth1}, we have $\text{g.c.d.}(a_{1i}, a_{2i}, p_i)=1$ for $1\leq i\leq 5$, hence $(a_{1i}, a_{2i})\neq (0,0)$ for any $i$. By the explicit description of torsion group $t\bbL(2,2,2,2,2)$, we obtain that $H=\langle\sum_{i=1}^4 \vec{x}_i-2\vec{c}, \vec{x}_4+\vec{x}_5- \vec{c} \rangle$, up to permutation isomorphisms.

{For $t=4$, by \eqref{genus 2 euler char} we have $2=\sum_{i=1}^4 \frac{1}{p_i}+\frac{2}{n}\leq \frac{6}{p_1},$ hence $p_1=2$ or $3$. Combining with Proposition \ref{smooth2}, by using $\textup{index}_q(\mathbf{p})\geq 2$ for all prime number $q$, we obtain that there are only the following three cases for $\mathbf{p}$:
$$(i)\, (2,2,3,3),\quad\; (ii)\, (2,2,4,4),\quad\ (iii)\, (3,3,3,3).$$
It is easy to calculate that $n=6,\; 4,\; 3$ respectively. Now by using Corollary \ref{smooth1}, we can derive the forms of $H$ as follows respectively:
$$(i)\, t\bbL(2,2,3,3),\; (ii)\, \langle\vec{x}_1+\vec{x}_2+\vec{x}_3+3\vec{x}_4-2\vec{c}\rangle,\; (iii)\,  \langle\vec{x}_1+\vec{x}_2+2\vec{x}_3+2\vec{x}_4-2\vec{c}\rangle.$$}

For $t=3$, \eqref{genus 2 euler char} reads $1=\sum_{i=1}^3 \frac{1}{p_i}+\frac{2}{n}\leq \frac{5}{p_1},$ hence $p_1\leq 5$. Combining with Proposition \ref{smooth2}, by using $\textup{index}_q(\mathbf{p})\geq 2$ for all prime number $q$, we obtain that there are only the following five cases for $\mathbf{p}$:
$$(i)\, (2,5,10),\quad\; (ii)\, (2,6,6),\quad\; (iii)\, (2,8,8),\quad\; (iv)\, (3,6,6),\quad\; (v)\, (5,5,5).$$
It is easy to see that $n=10,\; 12,\; 8,\; 6,\; 5$ respectively. According to Corollary \ref{smooth1}, we can derive the forms of $H$ as follows respectively:
$$\begin{array}{llllll}
(i)\, t\bbL(2,5,10), &(ii)\, t\bbL(2,6,6),&(iii)\, \langle\vec{x}_1+\vec{x}_2+3\vec{x}_3-\vec{c}\rangle,\\
(iv)\, \langle\vec{x}_1+5\vec{x}_2+5\vec{x}_3-2\vec{c}\rangle,& (v)\, \langle\vec{x}_1+\vec{x}_2+3\vec{x}_3-\vec{c}\rangle.&
\end{array}$$
To sum up, we obtain Table \ref{smooth of genus 2}.
\end{proof}}

\subsection{Relation to Arnold's exceptional unimodal singularities}

Arnold's exceptional unimodal singularities arise in the theory of singularities of differentiable
maps \cite{ArnoldGusejnZadeVarchenko1985}, which are related to the
mirror symmetry of K3 surfaces (see e.g. \cite{Dolgachev1996}), and have close relationship to
automorphic forms and Fuchsian singularities (see e.g. \cite{Lenzing1994, LenzingdelaPena2011,Dolgacev1975,Ebeling2003,Wagreich1980}).
There are 14 kinds of weighted projective lines associated to Arnold's 14 exceptional unimodal singularities.
For these 14 cases, we have the following result.

\begin{prop}
Let $\mathbb{P}_{\mathbf k}^1(\mathbf{p})$'s be the 14 weighted projective lines associated to Arnold's 14 exceptional unimodal singularities. Let $H$ be a finite subgroup of $\mathbb{L}(\bf{p})$. Then all the equivalences of the form $$({\rm coh}\mbox{-}\mathbb{P}_{\mathbf k}^1(\mathbf{p}))^{H}\stackrel{\sim}\longrightarrow {\rm coh}\mbox{-}\mathbf{Y}$$ are classified in Table \ref{Gorenstein parameter -1} (up to permutation isomorphisms).

\begin{table}[h]
\caption{Equivariant relations for Arnold's exceptional unimodal singularities}
\begin{tabular}{|c|c|c|c|c|c|}
  \hline
 \makecell*[c]{$\mathbb{P}_{\mathbf k}^1(p_1,p_2,p_3)$}&$H$&$\mathbf{Y}=\bbY(q_1,q_2,\cdots, q_s)$&$g_{\bbY}$\\
\hline
  \makecell*[c]{$\mathbb{P}_{\mathbf k}^1(2,3,7)$}&$$&&\\
  \hline
  \makecell*[c]{$\mathbb{P}_{\mathbf k}^1(2,3,8)$}&$\langle\vec{x}_1-4\vec{x}_3\rangle$&$\bbY(3,3,4)$&0\\
  \hline
  \makecell*[c]{$\mathbb{P}_{\mathbf k}^1(2,3,9)$}&$\langle\vec{x}_2-3\vec{x}_3\rangle$&$\bbY(2,2,2,3)$&0\\
  \hline
  \makecell*[c]{$\mathbb{P}_{\mathbf k}^1(2,4,5)$}&$\langle\vec{x}_1-2\vec{x}_2\rangle$&$\bbY(2,5,5)$&0\\
  \hline
  \multirow{5}*{$\mathbb{P}_{\mathbf k}^1(2,4,6)$}&\makecell*[c]{$\langle\vec{x}_1-2\vec{x}_2\rangle$}&$\bbY(2,6,6)$&0\\
  \cline{2-4}
  &\makecell*[c]{$\langle\vec{x}_1-3\vec{x}_3\rangle$}&$\bbY(3,4,4)$&0\\
  \cline{2-4}
  &\makecell*[c]{$\langle2\vec{x}_2-3\vec{x}_3\rangle$}&$\bbY(2,2,2,3)$&0\\
  \cline{2-4}
  &\makecell*[c]{$t\bbL(2,4,6)$}&$\bbY(2,2,3,3)$&0\\
  \hline
  \makecell*[c]{$\mathbb{P}_{\mathbf k}^1(2,4,7)$}&$\langle\vec{x}_1-2\vec{x}_2\rangle$&$\bbY(2,7,7)$&0\\
  \hline
  \makecell*[c]{$\mathbb{P}_{\mathbf k}^1(2,5,5)$}&$\langle\vec{x}_2-\vec{x}_3\rangle$&$\bbY(2,2,2,2,2)$&0\\
  \hline
  \makecell*[c]{$\mathbb{P}_{\mathbf k}^1(2,5,6)$}&$\langle\vec{x}_1-3\vec{x}_3\rangle$&$\bbY(3,5,5)$&0\\
  \hline
  \makecell*[c]{$\mathbb{P}_{\mathbf k}^1(3,3,4)$}&$\langle\vec{x}_1-\vec{x}_2\rangle$&$\bbY(4,4,4)$&0\\
  \hline
  \makecell*[c]{$\mathbb{P}_{\mathbf k}^1(3,3,5)$}&$\langle\vec{x}_1-\vec{x}_2\rangle$&$\bbY(5,5,5)$&0\\
  \hline
  \multirow{5}*{$\mathbb{P}_{\mathbf k}^1(3,3,6)$}&$\langle\vec{x}_1-\vec{x}_2\rangle$&\makecell*[c]{$\bbY(6,6,6)$}&0\\
  \cline{2-4}
  &$\langle\vec{x}_2-2\vec{x}_3\rangle$&\makecell*[c]{$\bbY(2,3,3,3)$}&0\\
  \cline{2-4}
  &$\langle\vec{x}_1+\vec{x}_2-4\vec{x}_3\rangle$&\makecell*[c]{$\bbY(2)$}&1\\
  \cline{2-4}
  &$t\bbL(3,3,6)$&\makecell*[c]{$\bbY(2,2,2)$}&1\\
  \hline
  \multirow{2}*{$\mathbb{P}_{\mathbf k}^1(3,4,4)$}&$\langle2\vec{x}_2-2\vec{x}_3\rangle$&\makecell*[c]{$\bbY(2,2,3,3)$}&0\\
  \cline{2-4}
  &$\langle\vec{x}_2-\vec{x}_3\rangle$&\makecell*[c]{$\bbY(3,3,3,3)$}&0\\
  \hline
  \makecell*[c]{$\mathbb{P}_{\mathbf k}^1(3,4,5)$}&$$&&\\
  \hline
  \multirow{8}*{$\mathbb{P}_{\mathbf k}^1(4,4,4)$}&$\langle2\vec{x}_1-2\vec{x}_2\rangle$&\makecell*[c]{$\bbY(2,2,4,4)$}&0\\
  \cline{2-4}
  &$\langle\vec{x}_1-\vec{x}_2\rangle$&\makecell*[c]{$\bbY(4,4,4,4)$}&0\\
  \cline{2-4}
  &$\langle\vec{x}_1+\vec{x}_2-2\vec{x}_3\rangle$&\makecell*[c]{$\bbY(2,2)$}&1\\
  \cline{2-4}
  &$\langle2\vec{x}_1-2\vec{x}_2,2\vec{x}_2-2\vec{x}_3\rangle$&\makecell*[c]{$\bbY(2,2,2,2,2,2)$}&0\\
  \cline{2-4}
  &$\langle\vec{x}_1-\vec{x}_2,2\vec{x}_2-2\vec{x}_3\rangle$&\makecell*[c]{$\bbY(2,2,2,2)$}&1\\
  \cline{2-4}
  &$t\bbL(4,4,4)$&\makecell*[c]{$\bbY$}&3\\
  \hline
\end{tabular}
\label{Gorenstein parameter -1}
\end{table}
\end{prop}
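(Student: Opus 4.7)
The plan is to reduce the classification to a finite, purely combinatorial enumeration, leveraging the machinery already established. Since $\mathbb{L}(\mathbf{p})/t\mathbb{L}(\mathbf{p}) \cong \mathbb{Z}$ by Lemma \ref{torsiongp}(2), every finite subgroup $H \leq \mathbb{L}(\mathbf{p})$ lies in the torsion subgroup $t\mathbb{L}(\mathbf{p})$. Moreover, by Theorem \ref{main result}, each such $H$ \emph{automatically} yields an equivalence $(\coh\mathbb{P}_{\mathbf k}^1(\mathbf{p}))^{H} \stackrel{\sim}{\to} \coh\mathbf{Y}$ for some weighted projective curve $\mathbf{Y}$. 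Hence the problem reduces to enumerating all subgroups $H \leq t\mathbb{L}(\mathbf{p})$ up to the natural permutation isomorphisms $\pi_\sigma$ permuting equal weights, and then reading off the corresponding $\mathbf{Y} = \mathbb{Y}(\mathbf{q}; \boldsymbol\mu)$ from Proposition \ref{change of weight}.

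First I would use Lemma \ref{torsiongp}(3) to compute $|t\mathbb{L}(\mathbf{p})| = p_1 p_2 p_3/\text{l.c.m.}(p_1,p_2,p_3)$ for each of the 14 Arnold weight types. The resulting orders are $1,2,3,4,5$, or at most $9$, with the sole exception $\mathbf{p}=(4,4,4)$ where the order is $16$. For $(2,3,7)$ and $(3,4,5)$ the torsion group is trivial, which is why their rows in Table \ref{Gorenstein parameter -1} record no equivariant data. For the remaining twelve types I would invoke Lemma \ref{torsiongp}(4) to write down explicit generators of $t\mathbb{L}(\mathbf{p})$ of the form $\frac{p_i}{d_{ij}}\vec{x}_i - \frac{p_j}{d_{ij}}\vec{x}_j$, decompose $t\mathbb{L}(\mathbf{p})$ as a product of cyclic groups, and list its entire subgroup lattice.

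Once a subgroup $H$ is fixed by a set of normal-form generators $\vec{y}_i = \sum_j a_{ij}\vec{x}_j + a_i\vec{c}$, Proposition \ref{change of weight} returns the weight tuple $\mathbf{q}$ directly from the integers $d_j = \text{g.c.d.}(a_{1j},\ldots,a_{sj},p_j)$ and the multiplicities $n_j = nd_j/p_j$, together with the genus $g_\mathbb{Y} = \tfrac{n}{2}\sum_j(1-d_j/p_j) - n + 1$. These are pure arithmetic computations which I would tabulate directly. To avoid double-counting one must then quotient by the stabiliser of $\mathbf{p}$ inside the symmetric group $S_t$ (for equal weights), identifying $H$ with $\pi_\sigma(H)$.

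The main obstacle is $\mathbf{p}=(4,4,4)$, where $t\mathbb{L}(4,4,4) \cong (\mathbb{Z}/4\mathbb{Z})^2$ has a rich $16$-element subgroup lattice and is acted on by $S_3$ permuting the three weighted points; organising the subgroups into $S_3$-orbits to produce the six rows listed in Table \ref{Gorenstein parameter -1} is the principal bookkeeping step. A secondary obstacle is $\mathbf{p}=(3,3,6)$ with $t\mathbb{L}\cong (\mathbb{Z}/3\mathbb{Z})^2$, requiring a similar $S_2$-orbit analysis. In all remaining cases the torsion subgroup is cyclic of prime or small order, so the subgroups are totally ordered and the enumeration is immediate. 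After each enumeration, applying Proposition \ref{change of weight} fills in the columns $\mathbf{Y} = \mathbb{Y}(\mathbf{q})$ and $g_\mathbb{Y}$, completing the classification recorded in Table \ref{Gorenstein parameter -1}.
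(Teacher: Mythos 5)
Your proposal is correct and takes essentially the same route as the paper: reduce to subgroups of $t\mathbb{L}(\mathbf{p})$ via Lemma \ref{torsiongp}, enumerate them for each of the 14 weight types up to permutation isomorphism, and read off $\mathbf{q}$ and $g_{\mathbb{Y}}$ from Proposition \ref{change of weight}, with Theorem \ref{main result} guaranteeing that every such $H$ yields a weighted projective curve. One minor slip in your case analysis: $t\mathbb{L}(2,4,6)\cong(\mathbb{Z}/2\mathbb{Z})^{2}$ is Klein rather than cyclic, so its three order-two subgroups are incomparable (as the table indeed reflects), but this does not affect the method.
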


\begin{proof}
The weight types $\bf{p}$'s of the weighted projective lines $\mathbb{P}_{\mathbf k}^1(\mathbf{p})$'s associated to Arnold's 14 exceptional unimodal singularities are well-known; see e.g. \cite[Table 1]{Wagreich1980}. For such a weight type $\bf{p}$, it is easy to calculate the subgroup $H$ of $t\mathbb{L}(\bf{p})$ by the explicit description of the torsion group $t\mathbb{L}(\bf{p})$ in Lemma \ref{torsiongp}. Then for each $H$, by Theorem \ref{trichotomy} we have an equivalence \begin{equation}({\rm coh}\mbox{-}\mathbb{P}_{\mathbf k}^1(\mathbf{p}; {\boldsymbol\lambda}))^{H}\stackrel{\sim}\longrightarrow {\rm coh}\mbox{-}\mathbf{Y}
 \end{equation} for some weighted projective curve $\mathbf{Y}=\mathbb{Y}(\mathbf{q};{\boldsymbol\mu})$, where the weight type ${\bf q}$ and the genus $g_{\bbY}$ can be calculated by Proposition \ref{change of weight}. This finishes the proof.
\end{proof}

\noindent {\bf Acknowledgements.}\quad
The authors would like to thank Jianmin Chen and Xiao-wu Chen for their useful discussions.
The second author would like to thank Helmut Lenzing for explaining the relations between geometric actions and degree-shift actions on weighted projective lines.
This work was supported by the National Natural Science Foundation of China (No. 11801473).

\bibliographystyle{plain}

\end{document}